\documentclass[11pt,a4paper]{amsart}

\usepackage[a4paper,top=2cm,bottom=2cm,left=2cm,right=2cm]{geometry}
\usepackage[english]{babel}
\usepackage{newlfont}
\usepackage{fancyhdr}
\usepackage{graphicx}
\usepackage{amsmath}
\usepackage{amsfonts}
\usepackage[all]{xypic}
\usepackage{amsthm}
\usepackage{latexsym}
\usepackage[utf8]{inputenc}
\usepackage{enumerate}


\theoremstyle{plain}
\newtheorem*{thm2}{Theorem}
\newtheorem*{thm3}{Theorem 1}
\newtheorem{thm}{Theorem}[section]

\newtheorem{cor}[thm]{Corollary}
\newtheorem{lem}[thm]{Lemma}
\newtheorem{prop}[thm]{Proposition}

\theoremstyle{definition}
\newtheorem{defn}{Definition}[section]
\newtheorem{oss}{Remark}
\theoremstyle{remark}

\author{Camilla Felisetti}
\title{A support theorem for nested Hilbert schemes of planar curves}

%



\newcommand{\ita}{\textit}
\newcommand{\gr}{\textbf}


\newcommand{\co}{\mathbb{C}}






\newcommand{\cohilbnest}{\mathcal{C}_{b_0}^{[m,m+1]}}         			
\newcommand{\cohilb}{{\mathcal{C}_{b_0}}^{[m]}} 								   	

\begin{document}
\maketitle

\begin{abstract}
	Consider a family of integral complex locally planar curves. We show that under some assumptions on the base, the relative nested Hilbert scheme is smooth. In this case, the decomposition theorem of Beilinson, Bernstein and Deligne asserts that the pushforward of the constant sheaf on the relative nested Hilbert scheme splits as a direct sum of shifted semisimple perverse sheaves. We will show that no summand is supported in positive codimension. \\
	
	\smallskip
	\noindent \tiny{CLASSIFICATION:} 14D20, 14C30
\end{abstract}

\tableofcontents

\section{Introduction}
For the rest of this article curves are assumed to be complex, integral, complete and with locally planar singularities. For ease of the reader, we remind what locally planar singularities mean.
\begin{defn}\label{lps}
	Let $C$ be a complex curve. We say that $C$ has \ita{locally planar singularities} if for every $p\in C$ the completion $\hat{\mathcal{O}}_{C,p}$ of the local ring of $C$ at $p$ can be written as
	$$ \hat{\mathcal{O}}_{C,p}=\co[[x,y]]/(f_p)$$
	for some reduced series $f_p\in \co[[x,y]]$. 
\end{defn}
Let $C$ be a curve of arithmetic genus $p_a(C)$. 
We consider the Hilbert scheme of points $C^{[m]}$, which parametrizes length $m$ finite subschemes of $C$. More precisely the $m-$th Hilbert scheme of points of $C$ is defined as 
$$C^{[m]}:= \lbrace \text{zero dimensional closed subschemes Z}\subset C \mid  \dim(\mathcal{O}_C/\mathcal{I}_Z)=m \rbrace$$
where $\mathcal{I}_Z$ is the ideal sheaf of $Z$. 

Hilbert schemes have been introduced by Grothendieck in \cite{Gr} and are now the focus of several works in mathematics. For a general introduction to Hilbert schemes of points and their properties we refer to \cite{Ko,R}. 
In \cite{AIK} and \cite{BGS}, these varieties are proved to be complete, integral, $m$ dimensional and locally complete intersections.
Moreover there is a forgetful morphism $\rho: C^{[n]}\rightarrow C^{(n)}$ from the Hilbert scheme to the symmetric product of the curve that maps any subscheme $Z$ to his support. Such a map is an isomorphism of algebraic varieties when the curve $C$ is nonsingular, while it is birational for singular curves. \\

Families of relative Hilbert schemes of plane curves are of particular interest because of their relation to curve counting questions, such as the proof of G\"{o}ttsche conjecture (see \cite{KST}) or the study of invariants of knots which arise as links of singularities (see \cite{OS}).\\

Here we consider the so called \ita{nested} Hilbert scheme $C^{[m,m+1]}$ of length $m+1$ subschemes of $C$ in which an ideal of colength 1 is fixed. More precisely we define $C^{[m,m+1]}$ as
\begin{align*}
C^{[m,m+1]}:&=\lbrace (z',z)\mid z' \in C^{[m]}, z\in C^{[m+1]}, z'\subset z\rbrace \\
&=\lbrace (I,J) \text{ ideals of }\mathcal{O}_C\mid I\subset J \text{ and } \dim(\mathcal{O}_C/J)=m, \dim(\mathcal{O}_C/I)=m+1\rbrace.
\end{align*}
 
 Also, one can consider relative versions of $C^{[m]}$ and $C^{[m,m+1]}$(see \cite{Ko}): if $\pi:\mathcal{C}\rightarrow B$ is a proper and flat family of curves, we define the relative (nested) Hilbert scheme over $B$ as 
\begin{align*}
\pi^{[m]}:& \quad \mathcal{C}^{[m]} \rightarrow B, \quad (\mathcal{C}^{[m]})_b=(\mathcal{C}_b)^{[m]};\\
\pi^{[m,m+1]}:&\quad \mathcal{C}^{[m,m+1]} \rightarrow B, \quad(\mathcal{C}^{[m,m+1]})_b=(\mathcal{C}_b)^{[m,m+1]}.
\end{align*}
The planarity of curves ensures the existence of families in which the total space of the relative Hilbert scheme is smooth (see \cite{Sh}); ultimately this is a consequence of the smoothness of the Hilbert scheme of points on a surface. Picking one of those families, the Decomposition Theorem by Beilinson, Bernstein and Deligne \cite{BBD} applied to the map $\pi^{[m]}$ asserts that the complex $R\pi^{[m]}_*\co$ decomposes as a direct sum of shifted intersection complexes associated to local systems on constructible subsets of the base.\\
Among them we find the intersection complex whose support is the whole base $B$. More precisely, if one denotes by $\tilde{\pi}: \tilde{\mathcal{C}}\rightarrow \tilde{B}$ the restriction of the family to the smooth locus, then any fiber is a smooth curve and its Hilbert scheme coincides with the symmetric product; in particular the map $\tilde{\pi}^{[m]}$ is smooth. Hence the summand of $R\pi^{[m]}_*\mathbb{Q}[m+\dim B]$ with support equal to $B$ is $\bigoplus IC_B(R^i\tilde{\pi}^{[m]}_*\mathbb{Q})[-i]$. Migliorini and Shende  showed that this is in fact the only summand.
\begin{thm2}[\cite{MS1}, Theorem 1]\label{miglioshende}
	Let $\mathcal{C}\rightarrow B$ be a proper and flat family of integral plane curves and let $\tilde{\pi}:\tilde{\mathcal{C}}\rightarrow \tilde{B}$ be its restriction to the smooth locus. If $\mathcal{C}^{[m]}$ is smooth then
	$$ R\pi^{[m]}_*\mathbb{Q}[m+dimB]=\bigoplus IC_B(R^i\tilde{\pi}^{[m]}_*\mathbb{Q})[-i].$$
\end{thm2}
In this paper we show that one can always find a base $B$ such that the relative nested Hilbert scheme is smooth and prove an analogue of theorem by Migliorini and Shende.
\begin{thm3}\label{teo}
	Let $\mathcal{C}\rightarrow B$ be a proper and flat family of integral plane curves and let $\tilde{\pi}:\tilde{\mathcal{C}}\rightarrow \tilde{B}$ be its restriction to the smooth locus. If $\mathcal{C}^{[m,m+1]}$ is smooth then
	$$ R\pi^{[m,m+1]}_*\mathbb{Q}[m+1+dimB]=\bigoplus IC_B( R^i\tilde{\pi}^{[m,m+1]}_*\mathbb{Q})[-i].$$
\end{thm3}

Let us give an overview of the paper. In section 2 we review some basic deformation theory of plane curve singularities and use this notions in section 3, where we prove the smoothness of the relative nested Hilbert scheme. In section 4 we introduce a generalized notion of discriminants, called \ita{higher discriminants}: by a theorem of Migliorini and Shende, one can prove that all the summands of the decomposition must be supported on some irreducible components of the higher discriminants, restricting significantly the possible candidates for supports. Moreover we give a criterion to determine whether a stratum is a support. The theory of higher discriminants allows us to reduce the proof of Theorem 1 for families of nodal curves. 
In section 5 we reprove Theorem 1 for families Hilbert schemes as in \cite{MS1} and prove Theorem 1 for families of nested Hilbert schemes.

\section{Versal deformations of curves singularities}

As we will systematically employ versal deformation of curve singularities (as analytic spaces), we recall here some known results.  For further details we refer to \cite{GLS}. 
\begin{defn}Let $(X,x)$ and $(S,s)$ be germs of complex analytic spaces.
	A deformation of $(X,x)$ over $(S,s)$ consists of a flat morphism $\phi: (\mathcal{X},x)\rightarrow(S,s)$ of germs of complex analytic spaces, together with an isomorphism from $(X,x)$ to the fibre of $\phi$ over $s$, $(X,x)\rightarrow (\mathcal{X}_s,x):=(\phi^{-1}(s),x)$.\\
	$(\mathcal{X},x)$ is called the \ita{total space}, $(S,s)$ the \ita{base space} and $(X,x)\cong (\mathcal{X}_s,x)$ the \ita{special fibre} of the deformation. We denote the above deformation by 
	$$ (X,x)\xrightarrow{i}(\mathcal{X},x)\xrightarrow{\phi}(S,s)$$
	or, when we want to shorten the notation, just by $(i,\phi):(\mathcal{X},x)\rightarrow(S,s)$.
\end{defn}

Roughly speaking, a versal deformation of a germ of complex analytic spaces is a deformation that contains basically all information about any possible deformation of this germ. One of the fundamental facts of deformation theory  is that any isolated singularity $(X,x)$ has a versal deformation.\\
Less informally, we might say that a deformation $(i,\phi)$ of $(X, x)$ over $(S,s)$ is versal if any deformation of $(X,x)$ over some other base space $(T,t)$ can be induced from $(i,\phi)$ by some base change $\psi:(T,t)\rightarrow(S,s)$. Moreover, if a deformation of $(X, x)$ over some subgerm $(T', t)\subset(T,t)$  is given and induced by some base change $\psi':(T',t)\rightarrow(S,s)$, then $\psi$ can be chosen in such a way that it extends $\psi'$.
Let us give now precise definitions.
\begin{defn}
	Let $(X,x)$ and $(S,s)$ be germs of complex analytic spaces.
	\begin{enumerate}[(i)]
		\item  A deformation $(i,\phi): (X,x)\xrightarrow{i} (\mathcal{X},x)\xrightarrow{\phi}(S,s)$ is called \ita{versal} if for any other deformation $(j,\varphi):(\mathcal{Y},x)\rightarrow (T,t)$ of $(X,x)$ the following holds: for any closed embedding $k:(T',t)\rightarrow (T,t)$ of complex germs and any morphism $\psi':(T',t)\rightarrow (S,s)$ there exists a morphism $\psi:(T,t)\rightarrow (S,s)$ satisfying
		\begin{enumerate}
			\item $\psi\circ k=\psi'$, and
			\item $(j,\varphi)=(\psi^*i,\psi^*\phi)$.
			\end{enumerate}
		\item  A deformation is \ita{locally versal} if it induces versal deformations of all the singularities of $X$.
		\item A versal deformation is called \ita{miniversal} if, with the notation of (iii), the Zariski tangent map $\mathrm{d}\psi: T_tT\rightarrow T_sS$ is uniquely determined by $(i,\phi)$ and $(j,\varphi)$.
		\end{enumerate}

\end{defn}

In the following section we will often use miniversal deformations since they can be described explicitly. More precisely let $(C,0)$ be the germ at the origin of the zero locus of some $f\in \co[x,y]$ such that $f(0)=0$. Fix $g_1\ldots g_t \in \co[x,y]$ whose images form a basis of the vector space  $\co[x,y]/(f,\partial_x f,\partial_y f)$. Then consider $F:\co^t\times \co^2\rightarrow \co^t\times\co$ given by $F(u_1,...,u_t,x,y) = (u_1,\ldots,u_t, \sum_i (f + g_iu_i)(x,y))$. Taking the fibre over $\co^t\times 0$ gives a family of curves over $\co^t$; taking germs at the origin gives the miniversal deformation $(\mathcal{C},0) \rightarrow(\co^t,0)$ of $C$. Moreover, if $g_1',\ldots,g_s'\in \co[x,y]$ are any functions and $(\mathcal{C}',0)\rightarrow(\co^s,0)$ the analogously formed deformation of $C$, then the tangent map $\co^s\rightarrow \co[x,y]/(f,\partial_x f,\partial_yf)$ is just induced by the quotient $\co[x,y]\rightarrow \co[x,y]/(f,\partial_x f,\partial_yf)$. As soon as this map is surjective, the family $(\mathcal{C}',0)\rightarrow(\co^s, 0)$ is itself miniversal.\\

Now we would like to have a measure of "how singular" a curve is, for example we could look at how far a curve is from its normalization. Given a singular curve $C$ and denoted its normalization by $\overline{C}$, we define the cogenus $\delta$ to be the difference between its arithmetic and geometric genera $\delta(C):=p_a(C)-p_a(\overline{C})$. 
For example, the cogenus of a curve with one node is precisely 1. 
The following theorem, due to Teissier, shows why the cogenus is a good candidate for our purpose. Moreover it will be a key result to reduce the proof of Theorem 1 to the case of a family of nodal curves.

\begin{thm}[\cite{T}]\label{T}
	Let $\mathcal{C}\rightarrow B$ be a family of curves. Then the cogenus is an upper semicontinuous function on $B$. Local versality is an open condition and in a locally versal family the locus of $\delta$-nodal curves is dense in the locus of curves with cogenus at least $\delta$. In particular, the locus of curves of cogenus $\delta$ in a locally versal family has codimension $\delta$.
\end{thm}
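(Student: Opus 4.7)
The plan is to split the theorem into its three assertions — upper semicontinuity of $\delta$, openness of local versality, and density of the $\delta$-nodal locus — and to handle each of them locally at the singular points of the fibers. The basic reduction is the additive decomposition $\delta(C)=\sum_{p\in \mathrm{Sing}(C)}\delta_p(C)$, where $\delta_p(C)=\dim_\co \widetilde{\mathcal{O}}_{C,p}/\mathcal{O}_{C,p}$. This reduces each statement to one about the miniversal deformation of a single plane curve singularity, for which the explicit construction recalled in the previous section is available.

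Upper semicontinuity of $\delta$ and openness of local versality I would dispatch first. For the former, at each fiber $\delta(C_b)$ equals the length of the coherent sheaf $\nu_{b,*}\mathcal{O}_{\widetilde{C}_b}/\mathcal{O}_{C_b}$ on $C_b$, and upper semicontinuity follows from standard semicontinuity for lengths of coherent sheaves in flat families (equivalently, $\delta=p_a-p_g$ with $p_a$ locally constant and $p_g$ lower semicontinuous). For the latter, local versality at $b$ is equivalent to surjectivity of the Kodaira--Spencer map $T_b B\to \bigoplus_{p\in \mathrm{Sing}(C_b)}T^1_{(\mathcal{C}_b,p)}$, and surjectivity of a linear map between fibers of bundles is an open condition on $b$.

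The heart of the statement is the density of $\delta$-nodal curves in the $\delta$-constant locus together with the codimension assertion. Having localized to the miniversal deformation $(\mathcal{X},0)\to (\co^\tau,0)$ of a single singularity of local $\delta$-invariant $\delta_0$, I would prove: (i) the equigeneric stratum has codimension exactly $\delta_0$ in $\co^\tau$, and (ii) $\delta_0$-nodal deformations form a dense subset of it. For (i), parameterize the equigeneric stratum via the normalization: equigeneric deformations lift to deformations of $\widetilde{C}$, and a dimension count using $\delta_0=\dim \widetilde{\mathcal{O}}/\mathcal{O}$ gives the expected codimension. For (ii), proceed by induction on $\delta_0$: given an equigeneric deformation, perturb it further so that the singular point breaks into simpler singularities of strictly smaller local $\delta$-invariant, apply the inductive hypothesis at each new singularity to produce nodal deformations, and glue them together to obtain a $\delta_0$-nodal curve arbitrarily close to the given one.

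The hard step is undoubtedly (ii): controlling how the local branches of the normalization interact under deformation, which is the substance of Teissier's equigeneric theorem. Once (i) and (ii) are established locally, openness of versality transfers them to a global statement: in a neighborhood of any $b\in B$ the base dominates, via the Kodaira--Spencer map, the product of the local miniversal bases at the singularities of $C_b$, so the codimension-$\delta$ cogenus stratum and its dense $\delta$-nodal open subset on the miniversal side pull back to $B$ with the asserted properties.
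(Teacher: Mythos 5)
This statement is not proved in the paper at all: it is imported verbatim from Teissier \cite{T} (with the equigeneric part also treated in \cite{DH} and \cite{GLS}), so your proposal must be measured against the content of those references rather than against an argument in the text. As an outline of that literature your reductions are the right ones, but two of your steps conceal genuine gaps. First, the semicontinuity argument: the sheaves $\nu_{b,*}\mathcal{O}_{\widetilde{C}_b}/\mathcal{O}_{\mathcal{C}_b}$ are defined fibre by fibre and do \emph{not} arise as the fibres of a coherent sheaf on $\mathcal{C}$ flat over $B$ --- the failure of simultaneous normalization is exactly the phenomenon being measured --- so ``standard semicontinuity of lengths in flat families'' does not apply; and lower semicontinuity of the geometric genus of the fibres is essentially the statement being proven, so the parenthetical alternative is circular. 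The genuine input is local-analytic: for $b$ near $b_0$, the sum of the local $\delta$-invariants of $\mathcal{C}_b$ in a small ball around a singular point $p$ of $\mathcal{C}_{b_0}$ is at most $\delta_p(\mathcal{C}_{b_0})$, which Teissier proves by relating $\delta$ to the conductor and the discriminant (equivalently to the genus of the Milnor fibre); some such argument is indispensable. Similarly, openness of (local) versality is not mere linear algebra, since the target $\bigoplus_p T^1_{(\mathcal{C}_b,p)}$ varies with $b$; one needs the coherence of the relative $T^1$ (and $T^2$) sheaves and a Nakayama-type argument, i.e.\ the openness-of-versality theorem quoted in \cite{GLS}.

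Second, and more seriously, the heart of the theorem --- density of $\delta$-nodal curves in the locus of cogenus $\geq\delta$, and the codimension statement --- is not actually established by your induction. The step ``perturb so that the singular point breaks into simpler singularities of strictly smaller local $\delta$-invariant'' presupposes that every non-nodal singularity admits a $\delta$-constant deformation which strictly simplifies it; that assertion is essentially equivalent to the statement that the generic equigeneric deformation is nodal, so as written the induction is circular, and you say yourself that step (ii) ``is the substance of Teissier's equigeneric theorem.'' The non-circular route, which you only gesture at in (i), is Teissier's characterization of the $\delta$-constant locus as the locus over which simultaneous normalization exists, i.e.\ as the image of the space of deformations of the parametrization $\widetilde{C}\rightarrow(\co^2,0)$; one then shows that this space is smooth of dimension $\tau-\delta$ in the miniversal base (giving codimension exactly $\delta$) and that a generic deformation of the parametrization is an immersion with only transverse double points (giving density of the $\delta$-nodal locus). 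These two facts are precisely the content of \cite{T}; your sketch correctly locates where they are needed but does not supply them, so it is an outline of the known proof rather than a proof.
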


As we are working with the cogenus we would like to have a result that allows us not to care about $p_a(\overline{C})$. In \cite{L} Laumon showed that any curve singularity can be found on a rational curve.  Moreover, given a family of curves $\mathcal{C}\rightarrow B$, then around any point $b_0\in B$ one can find a different family $\pi':\mathcal{C}'\rightarrow B'$ of rational curves such that $\mathcal{C}'_{b_0}$ is rational with the same singularities\footnote{By having the same singularities, we mean that the completions of local rings at two corresponding singular points (see Definition \ref{lps}) are isomorphic.} as $\mathcal{C}_{b_0}$ and the two families induce the same deformations of the singularities of the central fiber. 
This is a consequence of the following proposition.


\begin{prop}[\cite{MS1}, Cor. 6]\label{rat}
	Let $\pi: \mathcal{C}\rightarrow B$ be a family of curves. Fix $b_0 \in B$, and let $\overline{\mathcal{C}}_{b_0}$ be the normalization of $\mathcal{C}_{b_0}$.Then there exists a neighbourhood $b_0\in B'\subseteq B$ and a family $\pi': \mathcal{C}'\rightarrow B'$ such that $\mathcal{C}'_{b_0}$ is rational with the same singularities as $\mathcal{C}_{b_0}$, and $\mathcal{C}$ and $\mathcal{C}'$ induce the same deformations of these singularities on $B'$. In particular, they have the same discriminant locus.\\ Moreover, if $\tilde{\pi}:\mathcal{C}\rightarrow \tilde{B}$ and $\tilde{\pi}':\mathcal{C}'\rightarrow \tilde{B}'$ are the restrictions of the families to the locus of points with smooth fibre, on $\tilde{B}'$ we have an equality of local systems $R^1\tilde{\pi}_*\co=R^1\tilde{\pi}_*'\co\oplus H^1(\overline{\mathcal{C}}_{b_0})$, where $H^1(\overline{\mathcal{C}}_{b_0})$ denotes the constant local system with this fiber.
\end{prop}

To make use of such a replacement we need to know that $\mathcal{C}'^{[m,m+1]}$ is smooth if $\mathcal{C}^{[m,m+1]}$ is.  This follows from results on the smoothness of the nested Hilbert scheme which we are going to show. The results and their proof are closely analogous to \cite[Prop.17 and Thm.19]{Sh}, in which they are stated for $\mathcal{C}^{[m]}$.

\section{Smoothness of the relative nested Hilbert scheme}

Let $V\subset \co[x,y]$ be a finite dimensional smooth family of polynomials and consider the family of curves
$$\mathcal{C}_V:=\{(f,p)\in V\times \co^2\mid f(p)=0\}.$$
If we consider the associated family of nested Hilbert scheme $\mathcal{C}_V^{[m,m+1]}$ then it is included in  $V\times (\co^2)^{[m,m+1]}$. In \cite{C}, Cheah shows that the nested Hilbert scheme $(\mathbb{C}^2)^{[m,m+1]}$ is nonsingular for all $m$. Moreover she gives an explicit description of its tangent space:  if $(I,J)$ is a pair of ideals of $\co[x,y]$ with $I\subseteq J$ such that $(I,J)$ defines a point in $(\mathbb{C}^2)^{[m,m+1]}$, then the tangent space $T_{(I,J)}(\co^2)^{[m,m+1]}$ is isomorphic to $Ker(\phi-\psi)$ where
$$\begin{array}{lcc}
\phi: Hom_{\co[x,y]}(I,\co[x,y]/I)\rightarrow Hom_{\co[x,y]}(I,\co[x,y]/J)&&\\
\psi: Hom_{\co[x,y]}(J,\co[x,y]/J)\rightarrow Hom_{\co[x,y]}(I,\co[x,y]/J)&&\\
\end{array}$$
are the obvious maps and $$(\phi-\psi): Hom_{\co[x,y]}(I,\co[x,y]/I)\oplus Hom_{\co[x,y]}(J,\co[x,y]/J) \rightarrow Hom_{\co[x,y]}(I,\co[x,y]/J)$$ is defined as  $(\phi-\psi)(\eta_1,\eta_2):=\phi(\eta_1)-\psi(\eta_2)$.\\
Let us detail this isomorphism a little bit. The tangent space $T_{J}(\co^2)^{[m]}$ to the Hilbert scheme $(\co^2)^{[m]}$ in an ideal $J$ is canonically isomorphic to $Hom_{\co[x,y]}(J,\co[x,y]/J)$ and the isomorphism is constructed in the following way. Given an element $\eta\in Hom_{\co[x,y]}(J,\co[x,y]/J)$ we choose a lifting $\tilde{\eta}:J\rightarrow \co[x,y]$ and such a lifting gives a tangent vector $J_{\varepsilon,\eta}=J+\epsilon\tilde{\eta}(J) $. The fact that $\eta$ is a morphism of $\co[x,y]$-modules ensures that $J_{\epsilon,\eta}$ is indeed an ideal of $\co[x,y,\varepsilon]/(\varepsilon^2)$ and thus that it defines a tangent vector.\\
Now we observe that $$T_{(I,J)}(\co^2)^{[m,m+1]}\subset T_{I}(\co^2)^{[m+1]}\oplus T_{J}(\co^2)^{[m]}\cong Hom_{\co[x,y]}(I,\co[x,y]/I)\oplus Hom_{\co[x,y]}(J,\co[x,y]/J).$$
The last isomorphism sends a pair $(\eta,\zeta)$ in a couple of tangent vectors $$(I_{\varepsilon,\eta},J_{\varepsilon,\zeta}) \qquad\text{with }I_{\varepsilon,\eta}=I+\varepsilon\tilde{\eta}(I), \quad J_{\varepsilon,\zeta}=J+\varepsilon\tilde{\zeta}(J),$$
that do not satisfy the condition $I_{\varepsilon,\eta}\subseteq J_{\varepsilon,\zeta}$ a priori;
this is ensured precisely by requiring that $(\eta,\zeta)$ lies in  $Ker(\phi-\psi)$.

Choose a polynomial $f\in I\subset J$. If we write $(\tilde{I},\tilde{J})$ for the image of the couple $(I,J)$ in $\co[x,y]/(f)$ then we have an exact sequence of vector spaces
\begin{equation}\label{es}
0\rightarrow T_{f,(\tilde{I},\tilde{J})}\mathcal{C}_V^{[m,m+1]}\rightarrow T_fV\times T_{(I,J)}(\co^2)^{[m,m+1]}\rightarrow \co[x,y]/I,
\end{equation}
where the last map is given by $$(f+\varepsilon g,(\eta,\zeta))\mapsto \eta(f)-g \text{ mod }I.$$
Even though $\zeta$ do not intervene explicitly in the last map, the condition $\eta(f)-g\equiv 0 \text{ mod }I$ ensures that infinitesimally $f+\varepsilon g$ is contained in $I_{\varepsilon,\eta}$. Since $(\eta,\zeta)\in Ker(\phi-\psi)$, $I_{\varepsilon,\eta}\subset J_{\varepsilon,\zeta}$; thus $f+\varepsilon g$ belongs to $J_{\varepsilon,\zeta}$ as well.\\
Now, we observe that if $f$ is reduced then all the fibers in a neighbourhood $U$ of $f$ are reduced and the relative nested Hilbert schemes $\mathcal{C}_U^{[m,m+1]}$ are reduced of pure dimension $\dim V+m+1$. Also they are locally complete intersections (see \cite{BGS}). Then $\mathcal{C}_V^{[m,m+1]}$ is smooth at a point $(f,(I,J))$ if the tangent space at this point has dimension $m+1+\dim V$. \\
Looking at dimensions of the vector spaces in (\ref{es}), one notices that $\dim T_fV=\dim V$ as $V$ is supposed to be smooth,  $\dim T_{(I,J)}(\co^2)^{[m,m+1]}=2m+2$ by \cite{C} and finally $\co[x,y]/I$ has dimension $m+1$ by hypothesis: as a result $\dim T_{f,(\tilde{I},\tilde{J})}\mathcal{C}_V^{[m,m+1]}=\dim V+m+1$ if and only if the last map in (\ref{es}) is surjective. 
The easiest way to ensure this is to ask for surjectivity already in the case $\eta=\zeta=0$, that is  $T_fV\rightarrow \co[x,y]/I$  is surjective. \\
We are now ready to prove the smoothness of the relative nested Hilbert scheme.
 
\begin{prop}\label{p17}
	Let $\mathcal{C}\rightarrow \mathbb{V}$ a family of versal deformations with base point $0\in \mathbb{V}$. Up to restricting $\mathbb{V}$ to a small neighbourhood of 0,  the relative nested Hilbert scheme $\mathcal{C}^{[m,m+1]}$ is smooth.
\end{prop}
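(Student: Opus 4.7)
The tangent-space calculation (\ref{es}) has already reduced the question: since $\mathcal{C}_{\mathbb{V}}^{[m,m+1]}$ is a local complete intersection of pure dimension $\dim\mathbb{V}+m+1$ near any point with reduced fiber, smoothness at $(f,(\tilde{I},\tilde{J}))$ is equivalent to surjectivity of the last arrow in (\ref{es}). As noted just before the proposition, a clean sufficient condition is that $T_f\mathbb{V}\to\co[x,y]/I$, obtained by setting $\eta=\zeta=0$, be surjective. I would therefore aim to establish this surjectivity at the central point $(f_0,(\tilde{I}_0,\tilde{J}_0))$ and then propagate it to a neighborhood by openness.

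For the central point, I would invoke the explicit description of versal deformations recalled in Section 2: one may realize $\mathcal{C}\to\mathbb{V}$, up to smooth factor, as a subfamily of the $\mathcal{C}_V$ construction with $V\subset\co[x,y]$ a vector space whose image in $\co[x,y]/(f_0,\partial_xf_0,\partial_yf_0)$ is surjective. Enlarging $V$ to include polynomials whose classes span $\co[x,y]/I_0$ (a finite-dimensional quotient) keeps the deformation versal while forcing $T_{f_0}V+I_0=\co[x,y]$, i.e.\ the required surjectivity. Propagating to the original $\mathbb{V}$ then uses that two versal deformations differ by a smooth base change, along which smoothness of the total space descends.

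Having pinned down surjectivity in the central fiber, openness of the condition (the corank of the map in (\ref{es}) is upper semicontinuous on the parameter space) shows that the same surjectivity persists on an open neighborhood, confirming the ``sufficiently small representatives'' clause. I expect the main technical obstacle to be making the enlargement work \emph{uniformly} as $(I,J)$ varies over the nested Hilbert scheme of the central fiber, so that the chosen $V$ produces surjectivity simultaneously at all relevant central points. One way around this is to avoid enlarging $V$ altogether and instead use the full map in (\ref{es}), where the $\eta(f)$ contribution plugs the ``Jacobian-ideal gap'' between the image of $T_{f_0}\mathbb{V}$ and $\co[x,y]/I_0$; this refinement mirrors Shende's original argument for $\mathcal{C}^{[m]}$ and is likely how the paper proceeds.
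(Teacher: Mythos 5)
Your first two paragraphs do follow the paper's route: reduce via the exact sequence (\ref{es}) to surjectivity of $T_f\mathbb{V}\to\co[x,y]/I$, arrange this by enlarging the deformation, and transfer the conclusion to an arbitrary versal $\mathbb{V}$ using that any versal deformation is (compatibly) the miniversal one times a smooth factor. But the step you yourself flag as ``the main technical obstacle'' --- making the enlargement work uniformly as $(I,J)$ varies, and over nearby fibres $f'$, not just at one chosen $(I_0,J_0)$ over the central fibre --- is exactly the point your write-up leaves open, and the fallback you propose is not how the paper closes it. The paper's resolution is a one-line uniform choice: pick $\mathbb{V}\subset\co[x,y]$ versal with $T_f\mathbb{V}$ containing \emph{all} polynomials of degree $\leq m$. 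Since any colength-$(m+1)$ quotient $\co[x,y]/I$ is spanned by the classes of polynomials of degree $\leq m$ (the images of the degree-$\leq d$ polynomials strictly increase in dimension until they fill the quotient), the map $T_{f'}\mathbb{V}\to\co[x,y]/I$ is surjective simultaneously for \emph{every} ideal $I$ of colength $m+1$ in \emph{every} fibre, already with $\eta=\zeta=0$. So no per-ideal tailoring, no openness/propagation from the central point, and no compactness worry is needed; smoothness holds at every point of $\mathcal{C}_{\mathbb{V}}^{[m,m+1]}$ at once, and the remark on versal $=$ miniversal $\times$ smooth then gives the statement for an arbitrary versal base.

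Your suggested alternative --- keep $\mathbb{V}$ as given and let the $\eta(f)$ term in (\ref{es}) ``plug the Jacobian-ideal gap'' --- is not the mechanism of this proposition and would not go through as stated: with a fixed tangent image one only controls $\co[x,y]$ modulo the Jacobian ideal, and bridging from there to $\co[x,y]/I$ requires the genericity input of Lemma (\ref{lem18}); that combination is precisely the content of Theorem (\ref{thm19}), where the base is a small generic disc rather than a full versal base. For Proposition (\ref{p17}) the whole point is that versality lets you take the base large enough that the crude sufficient condition (surjectivity with $\eta=\zeta=0$) holds everywhere, so the finer argument is neither needed nor available without extra hypotheses. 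In short: right skeleton, but the uniformity gap is real as written, and its correct filling is the ``all polynomials of degree $\leq m$'' choice rather than the Theorem-(\ref{thm19})-style refinement you guessed.
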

\begin{proof}
Suppose $f$ is the polynomial defining $\mathcal{C}_0$. Choose $\mathbb{V}\subset \co[x,y]$ containing $f$ such that $\mathcal{C}_{\mathbb{V}}\rightarrow \mathbb{V}$ is a versal deformation of the singularity of $\mathcal{C}_0$ and $T_f\mathbb{V}$ contains all polynomials of degree $\leq m$. Then $T_f\mathbb{V}$ will be of dimension $\geq m+1$, thus for any $I$ of colength $m+1$, $T_f\mathbb{V}$ will project surjectively onto $\co[x,y]/I$. By the considerations above, the dimensions counting in (\ref{es}) implies that the relative nested Hilbert scheme $\mathcal{C}^{[m,m+1]}$ is smooth. 
\end{proof}
%

\begin{oss}
	The smoothness of the relative nested Hilbert scheme over any versal deformation is equivalent to the smoothness over the miniversal deformations. In fact, if $\overline{\mathcal{C}}\rightarrow \overline{\mathbb{V}}$ is the miniversal deformations there are compatible isomorphisms of germs $\mathbb{V}\cong \overline{\mathbb{V}}\times (\co^t,0)$  and $\mathcal{C}\cong \overline{\mathcal{C}}\times (\co^t,0)$ and hence also 
	$\mathcal{C}^{[m,m+1]}\cong \overline{\mathcal{C}}^{[m,m+1]}\times (\co^t,0)$
\end{oss}

From the smoothness of the relative nested Hilbert scheme we can deduce an analogue result as the one in \cite[Theorem 8]{MS1}. 
\begin{cor}\label{thm8}
	Let $\mathcal{C}_B\rightarrow B$ a family of curves. Given $b_0\in B$, let $(\mathbb{V},0)$ be the product of the versal deformations of singularities of $\mathcal{C}_{b_0}$. Moreover suppose that the relative Hilbert schemes $\mathcal{C}_B^{[d,d+1]}$ are smooth for all $d<m$.
	\begin{enumerate}[(i)]
		\item  The smoothness of $ \mathcal{C}_B^{[m,m+1]}$ depends only on the image $\mathcal{T}$ of $T_{b_0} B$ in $T_0\mathbb{V};$
		\item if $\mathcal{C}_B^{[m,m+1]}$ is smooth along $\mathcal{C}^{[m,m+1]}_{b_0}$ then $\dim\mathcal{T}\geq \min (\delta(\mathcal{C}_{b_0}), m+1)$;
		\item $\mathcal{C}_B^{[m,m+1]}$ is smooth along $\mathcal{C}^{[m,m+1]}_{b_0}$ if and only if $\mathcal{T}$ is transverse to the image of $T_x\mathcal{C}_{b_0}^{[m,m+1]}$ in $T_0\mathbb{V}$ for all $x\in \mathcal{C}_{b_0}^{[m,m+1]}$. It suffices for $\mathcal{T}$ to be generic of dimension at least $\delta(\mathcal{C}_{b_0})$.
	\end{enumerate}
\end{cor}
\begin{proof}
	To prove $(i)$ take a subscheme $z\in \mathcal{C}_{b_0}^{[m,m+1]}$ which decomposes as $$z=(z_0,\ldots, z_k)$$
	such that $z_0=(z_{01},z_{02})\in \mathcal{C}_{b_0}^{[d_0,d_0+1]}$ is a subscheme supported at a point $c_0$ and $z_i\in \mathcal{C}_{b_0}^{[d_i]}$ are length $d_i$ subschemes supported on points $c_i$.\\
	 Let $(\overline{\mathcal{C}}_i,c_i)\rightarrow (\mathbb{V}_i,0)$ be the miniversal deformations of the singularities $(\mathcal{C}_{b_0},c_i)$ and $(B,b_0)\rightarrow \prod(\mathbb{V}_i,0)$ a map along which $\coprod(\overline{\mathcal{C}}_i,c_i)\rightarrow (B,b_0)$ pulls back. 
	 We denote by $\mathcal{C}_{\mathbb{V}}\rightarrow \mathbb{V}:=\prod \mathbb{V}_i$ the induced family of deformations of $\mathcal{C}_{b_0}$.\\
	 Then analytically locally, the germ $(\mathcal{C}_B^{[m,m+1]},[z])$ pulls back from $(\overline{\mathcal{C}}_0^{[d_0,d_0+1]},[z_0])\cdot\prod (\overline{\mathcal{C}}_i^{d_i},[z_i])$ along the same map. We observe that the fibres of $(\overline{\mathcal{C}}_i^{d_i},[z_i])\rightarrow (\mathbb{V}_i,0)$ are reduced of dimension $d_i$ by \cite{AIK} and the total space is nonsingular by \cite[Prop. 17]{Sh}. Moreover the same holds for $(\overline{\mathcal{C}}_0^{[d_0,d_0+1]},[z_0])\rightarrow \mathbb{V}_0$ by proposition (\ref{p17}).
	  As the $\mathbb{V}_i$ were taken miniversal, the map $T_{b_0}B\rightarrow T\mathbb{V}=\prod T_0\mathbb{V}_i$ is uniquely defined and the smoothness of the pullback depends only on the image $\mathcal{T}$ of such a map. 
	  
	To check $(ii)$ we might assume by $(i)$ that the map $T_{b_0}B\rightarrow \prod T_0\mathbb{V}_i$ is an isomorphism and identify locally $B$ with its image $\overline{B}\subseteq\prod (\mathbb{V}_i,0)$. We can shrink $\overline{B}$ until it can be written as $B\times \mathbb{D}^{k}$ for some polydisc $\mathbb{D}^{k}$; as smoothness is an open condition we may shrink $\mathbb{D}^{k}$ further until $\mathcal{C}^{[m,m+1]}_{\mid B\times \epsilon}$ is smooth for all $\epsilon \in \mathbb{D}^k$.  By theorem \ref{T}, the locus of nodal curves with the same cogenus as $\mathcal{C}_{b_0}$ in $\prod \mathbb{V}_i$ is nonempty and of codimension $\delta(\mathcal{C}_{b_0})$; choose an $\epsilon$ such that $B\times \epsilon$ contains a point $p$ corresponding to such a curve. If $m+1\geq \delta$ the statement is trivial. If $m+1\leq \delta$, we can find a point $z\in \mathcal{C}_p^{[m,m+1]}$, which is a subscheme supported at $m+1$ nodes. The Zariski tangent space $T_z \mathcal{C}_p^{[m,m+1]}$ has dimension $2m+2$, therefore $\mathcal{C}_p^{[m,m+1]}$  cannot be smoothed over a base of dimension less than $m+1$. 
	
	Finally, $(iii)$ is stated for the relative compactified Jacobian in \cite[Corollary B.3]{FGVs} and the prove can be copied almost line by line. Since $\mathcal{C}^{[m,m+1]}_B=\mathcal{C}^{[m,m+1]}_{\mathbb{V}}\times_{\mathbb{V}} B$, by implicit function theorem the image $\mathcal{T}$ of $T_{b_0}B$ in $T_0\mathbb{V}$ is transverse to $T_x\mathcal{C}^{[m,m+1]}_{b_0}$ if and only the relative nested Hilbert scheme is smooth .
\end{proof}
\begin{cor}\label{codim}
If $\mathcal{C}\rightarrow B$ is a family of curves with $\mathcal{C}^{[m,m+1]}$ smooth, then for $\delta\leq m+1$, the locus of curves of cogenus $\delta$ is of codimension at least $\delta$ in $B$.
\end{cor}
\begin{proof}
	Suppose not and consider a generic $\delta-1$ subvariety $B'$ of $B$. Then the restriction $\mathcal{C}^{[m,m+1]}_{B'}\cong \mathcal{C}^{[m,m+1]}_{B}\times_{B} B'$ is nonsingular and intersects the locus of curves of cogenus $\delta$, but this contradicts item $(ii)$ of theorem \ref{thm8}.
	\end{proof}
\section{Supports}

For ease of the reader let us recall the statement of the decomposition theorem for nonsingular varieties.
\begin{thm}[\gr{Decomposition theorem}]\label{dec}
	Let $f:X\rightarrow Y$ be a proper map of nonsingular complex algebraic varieties. Then there exists a finite collection of constructible sets $Y_{\alpha}$ and local systems $\mathcal{L}_{\alpha}$ on $Y_{\alpha}$ such that the local system $Rf_*\mathbb{Q}[dim X]$ decomposes in the derived category of constructible sheaves as 
	\begin{equation}\label{dec3}
	Rf_*\mathbb{Q}[dim X]\cong \bigoplus_{\alpha} IC_{\overline{Y}_{\alpha}}(\mathcal{L}_{\alpha})[\dim X-\dim Y_{\alpha}].
	\end{equation}
\end{thm}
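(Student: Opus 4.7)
The plan is to invoke the framework of Beilinson, Bernstein, Deligne and Gabber in the guise of the decomposition theorem for pure complexes, so that the actual content to establish is that $Rf_*\mathbb{Q}[\dim X]$ is pure in the appropriate sense and that pure complexes split in the claimed form. Since the source $X$ is smooth, the shifted constant sheaf is perverse of weight $\dim X$; purity is preserved under proper direct images, and the decomposition then follows from the abstract semisimplicity theorem applied to its perverse cohomology sheaves.

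First I would reduce from $\co$-coefficients to an $\ell$-adic statement over a finite field by a standard spreading-out argument: pick a finitely generated $\z$-subalgebra $A\subset\co$ over which $X$, $Y$ and $f$ are defined, specialise at a closed point $\mathrm{Spec}\,\mathbb{F}_q\hookrightarrow \mathrm{Spec}\,A$, and transport the statement back to characteristic zero via proper base change together with the comparison between $\ell$-adic and Betti constructible derived categories. After this reduction, $\mathbb{Q}_\ell[\dim X]$ is pure of weight $\dim X$ because $X$ is smooth, and by Deligne's stability theorem (Weil~II) $Rf_*\mathbb{Q}_\ell[\dim X]$ remains pure.

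The core step is Gabber's semisimplicity theorem: any pure perverse sheaf over $\mathbb{F}_q$ is geometrically semisimple, and therefore decomposes, after splitting its perverse cohomology sheaves apart, as a direct sum of intermediate extensions $IC_{\overline{Y}_\alpha}(\mathcal{L}_\alpha)$ for irreducible local systems $\mathcal{L}_\alpha$ on locally closed smooth strata $Y_\alpha$. To separate the perverse cohomology sheaves ${}^{p}\mathcal{H}^i(Rf_*\mathbb{Q}[\dim X])$ sitting in different degrees into direct summands of the total complex, I would invoke the relative Hard Lefschetz theorem applied to a relatively ample line bundle, which yields precisely the shape of the decomposition (\ref{dec3}).

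The main obstacle is unquestionably Gabber's semisimplicity result itself, which is delicate and is not something one would reprove in passing. Two alternatives that avoid positive characteristic are M.~Saito's theory of pure polarisable Hodge modules, in which semisimplicity of proper direct images is built into the definitions, and the Hodge-theoretic proof of de~Cataldo--Migliorini, which bypasses Gabber by using classical Hodge theory on the fibre products $X\times_Y X$ together with relative Hard Lefschetz and an induction over the strata of a Whitney stratification of $Y$. In the remainder of this paper the theorem will be applied as a black box; the substantive work is not to re-prove (\ref{dec3}) but to pin down, in the specific setting of relative nested Hilbert schemes, which supports $\overline{Y}_\alpha$ actually appear.
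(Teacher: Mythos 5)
The paper gives no proof of this statement: it is the classical decomposition theorem of Beilinson--Bernstein--Deligne(--Gabber), recalled only for the reader's convenience and cited from \cite{BBD} (with Saito \cite{Sa} invoked later for the compatibility with mixed Hodge structures), and it is used throughout as a black box exactly as you indicate. Your sketch of the standard argument --- spreading out to a finite field, purity of $Rf_*\mathbb{Q}_\ell[\dim X]$ via Weil II, Gabber's semisimplicity of pure perverse sheaves, and relative Hard Lefschetz to split off the perverse cohomology sheaves, with Saito's Hodge modules or the de Cataldo--Migliorini Hodge-theoretic proof as characteristic-zero alternatives --- is an accurate account of the known proofs and is fully consistent with the paper's treatment, so there is nothing to compare beyond that.
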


\begin{defn}
We call \ita{supports of f} the $Y_{\alpha}$ appearing in equation (\ref{dec3}). 
\end{defn}
 We want describe the supports of the map $$\pi^{[m,m+1]}:\mathcal{C}^{[m,m+1]}\rightarrow B.$$ 
 Clearly among them we can always find the smooth locus $\tilde{B}$ of the family and the summand supported on $B$ is given by the direct sum of the cohomology sheaves $\bigoplus IC_B(R^i\tilde{\pi}^{[m]}_*\co)[-i]$, but a priori we could have other summands supported on subsets of positive codimension. \\
In general, it is not easy to determine the supports of a given map $f:X\rightarrow Y$. However, there exists a fairly general approach to the so called \ita{support type theorems} like the decomposition theorem, which was developed by Migliorini and Shende in \cite{MS2}.
 Such an approach relies on the fact that even though a stratum $S$ might be necessary in a Whitney stratification of a map $f$, the change in the cohomology of the fibres of $S$  can be predicted just by looking at the map on the strata containing $S$.\\
 Therefore, Migliorini and Shende constructed a coarser stratification, the \ita{stratification of higher discriminants}. This description refines the notion of discriminant: instead of looking at the inverse images of points one can consider the inverse images of discs $\mathbb{D}^r$ of varying dimension $r$. Clearly the bigger the disc is the more likely its inverse image will be nonsingular. Let us be more precise: suppose $Y$ is nonsingular and let $Y=\bigsqcup Y_{\alpha}$. Take $y\in Y$ and let $k$ be the dimension of the unique stratum containing $y$. Consider the codimension $k$ slice, meeting the stratum only in $y$. Its inverse image will be a nonsingular codimension $k$ subvariety of $X$. In case $Y$ is singular, we might be more careful about what we mean by "disc": we choose a local embedding $(Y,y)\subset (\co^n,0)$ and define a disc as the intersection of $Y$ with a nonsingular germ of complete intersection $T$ through $y$. The dimension of the disc is $\dim Y-\mathrm{codim} T$.
\begin{defn}Let $f:X\rightarrow Y$ be a morphism of algebraic varieties. We define the $i-th$ higher discriminant $\Delta^i(f)$ as:
	\begin{align*}
	\Delta^i(f):= \lbrace y\in Y\mid \text{there is no }(i-1)-\text{dimensional disc }\phi:\mathbb{D}^{i-1}\rightarrow Y,&\\
	\text{with } f^{-1}(\mathbb{D}^{i-1}) \text{ non singular , and }\mathrm{codim}(\mathbb{D}^{i-1},Y)=\mathrm{codim} (f^{-1}(\mathbb{D}^{i-1}),X)\rbrace.&
	\end{align*}
\end{defn}
The higher discriminants $\Delta^i(f)$ are closed algebraic subsets, and $\Delta^{i+1}(f)\subset \Delta^i(f)$ by the openness of nonsingularity and the semicontinuity of the dimension of the fibres. Also we would like to remark that 
$\Delta^1(f)$ is nothing but the discriminant $\Delta(f)$ that is the locus of $y\in Y$ such that $f^{-1}(y)$ is singular. 

One advantage of  higher discriminants is that they are usually much easier to determine via differential method than the strata of a Whitney stratification. As we are supposing $Y$ to be nonsingular, the implicit function theorem prescribes precise conditions under which the inverse image of a subvariety by a differentiable map is nonsingular: the tangent space of the subvariety must be transverse to the image of the differential. 
Hence, under this assumption we have the following 
\begin{prop}\label{hd}
	\begin{align*}
	\Delta^i(f):= \lbrace y\in Y\mid \text{ for every linear subspace } I\subset T_yY, \text{with }\dim I=i-1, &\\
	\text{the composition  }T_xX\xrightarrow{df}T_yY\rightarrow T_yY/I \text{ is not surjective for some } x\in f^{-1}(y)\rbrace&
	\end{align*}
\end{prop}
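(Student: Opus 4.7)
The plan is to reconcile the differential characterisation of $\Delta^i(f)$ stated in the proposition with the disc-theoretic definition given just above. The bridge between the two is the implicit function theorem: a smooth submanifold germ $T \subset Y$ of codimension $\dim Y - (i-1)$ through $y$ yields a disc $\mathbb{D}^{i-1}$ meeting the requirements of the definition precisely when $f$ is transverse to $T$ at every point of $f^{-1}(y)$.

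First I would establish the pointwise dictionary. Fix $T$ and $x \in f^{-1}(y)$. Since $X$ and $Y$ are nonsingular, transversality of $f$ to $T$ at $x$, namely $df_x(T_xX) + T_yT = T_yY$, is equivalent to the surjectivity of the composition $T_xX \xrightarrow{df_x} T_yY \rightarrow T_yY/T_yT$. Once transversality holds at every point of $f^{-1}(y)$, openness of transversality combined with the compactness (properness of $f$) of the fibre forces it to hold on an open neighbourhood $U$ of $f^{-1}(y)$ in $X$; then $f^{-1}(T) \cap U$ is smooth of the expected codimension $\dim Y - (i-1)$, and shrinking $T$ to lie inside $f(U)$ gives the desired disc. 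Conversely, if $f^{-1}(T)$ is nonsingular with $\mathrm{codim}(f^{-1}(T),X) = \mathrm{codim}(T,Y)$, a dimension count on the inclusion $T_x f^{-1}(T) \subseteq (df_x)^{-1}(T_yT)$ forces equality, which is equivalent to transversality of $f$ to $T$ at each $x \in f^{-1}(T)$, in particular over $y$.

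Putting these two facts together, $y \notin \Delta^i(f)$ in the original (disc) sense if and only if there exists a germ $T \subset Y$ through $y$ of codimension $\dim Y - (i-1)$ with $f$ transverse to $T$ at every $x \in f^{-1}(y)$. Setting $I := T_yT$, which is a subspace of dimension $i-1$, this in turn is equivalent to the existence of an $(i-1)$-dimensional linear subspace $I \subset T_yY$ such that for all $x \in f^{-1}(y)$ the composition $T_xX \rightarrow T_yY/I$ is surjective; the converse direction simply recovers $T$ as any smooth germ through $y$ with prescribed tangent space $I$ (e.g.\ the zero locus of a regular system of parameters complementary to $I$). Negating this equivalent condition yields exactly the formula for $\Delta^i(f)$ stated in the proposition.

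The only step that requires any care is the passage from pointwise transversality on $f^{-1}(y)$ to transversality on a whole neighbourhood of the fibre. This relies on the fibre being compact so that the open condition of transversality propagates uniformly; in the applications of this paper $f$ is proper, hence this is automatic and the argument goes through without obstruction.
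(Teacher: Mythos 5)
Your overall strategy --- the implicit-function-theorem dictionary between discs through $y$ and linear subspaces $I\subset T_yY$, with properness used to propagate transversality from $f^{-1}(y)$ to a neighbourhood --- is exactly the argument the paper has in mind (the paper states it in one sentence and gives no further proof). The forward implication is essentially fine, up to one slip: to guarantee that the whole preimage of the shrunken disc is contained in the open set $U$ where transversality holds, you should shrink $T$ inside $V:=Y\setminus f(X\setminus U)$, which is an open neighbourhood of $y$ because $f$ is proper, hence closed; asking only that $T\subseteq f(U)$ does not prevent points of $T$ from having preimages outside $U$.

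The genuine gap is in the converse. From the inclusion $T_xf^{-1}(T)\subseteq (df_x)^{-1}(T_yT)$ together with $\dim T_xf^{-1}(T)=\dim X-c$, where $c=\mathrm{codim}(T,Y)$, you can only conclude $\dim (df_x)^{-1}(T_yT)\ge \dim X-c$, which is automatic for any linear map; the inequality goes the wrong way, so no ``dimension count'' forces equality, and surjectivity of $T_xX\to T_yY/T_yT$ does not follow. In fact, with the set-theoretic (reduced) reading of ``$f^{-1}(T)$ nonsingular'' the implication is false: take $f:\co\to\co^2$, $t\mapsto (t^2,t^3)$, $y=0$ and $T$ the $x$-axis; the reduced preimage is the single point $t=0$, nonsingular and of codimension $1$, yet $df_0=0$, so no $1$-dimensional $I\subset T_yY$ is transverse to $f$ over $y$. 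The equivalence asserted in the proposition (and hence your converse) is correct only when $f^{-1}(T)$ is taken with its natural scheme structure: then one has the \emph{equality} of Zariski tangent spaces $T_xf^{-1}(T)=(df_x)^{-1}(T_yT)$, not merely an inclusion, so smoothness of $f^{-1}(T)$ of dimension $\dim X-c$ forces $\dim (df_x)^{-1}(T_yT)=\dim X-c$, i.e.\ transversality at every $x\in f^{-1}(y)$. Replacing your dimension count by this tangent-space identity (and the properness fix above) makes the argument complete and in line with the intended proof.
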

We may rephrase condition of proposition (\ref{hd}) saying that there is no $(i-1)$- dimensional subspace $I$ transverse to $f$.\\
The following result shows the relevance of the theory of higher discriminants in determining the summands appearing in the decomposition theorem. 
\begin{thm}[\cite{MS2},Theorem B]\label{thmB}
	Let $f:X\rightarrow Y$ be a map of algebraic varieties. Then the set of $i$-codimensional supports of the map $f$ is a subset of the set of $i$-codimensional irreducible components of $\Delta^i(f)$.
\end{thm}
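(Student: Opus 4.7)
The plan is to argue by contrapositive. Let $Z$ be an $i$-codimensional support of $f$, i.e., an irreducible subset of codimension $i$ in $Y$ appearing as the support of a summand $IC_Z(\mathcal{L}_Z)$ in the decomposition (\ref{dec3}). I would first establish the containment $Z\subseteq \Delta^i(f)$; combined with the standard lower bound $\mathrm{codim}\,\Delta^i(f)\geq i$, this forces $Z$ to coincide with a codimension-$i$ irreducible component of $\Delta^i(f)$, as desired.

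For the containment, I would proceed by contradiction. Assume there exists $y\in Z\setminus \Delta^i(f)$, so by Proposition \ref{hd} one can find an $(i-1)$-dimensional disc $\mathbb{D}^{i-1}\subset Y$ through $y$ with nonsingular preimage $X_{\mathbb{D}}:=f^{-1}(\mathbb{D}^{i-1})$ of the correct codimension in $X$. Proper base change along the inclusion $\mathbb{D}^{i-1}\hookrightarrow Y$ gives an identification
\[
(Rf_*\mathbb{Q}_X)|_{\mathbb{D}^{i-1}}\;\cong\; R(f|_{X_{\mathbb{D}}})_*\mathbb{Q}_{X_{\mathbb{D}}},
\]
so the restriction to $\mathbb{D}^{i-1}$ of the decomposition (\ref{dec3}) must agree with the decomposition theorem applied to the map $f|_{X_{\mathbb{D}}}$.

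The essential dimension count is $\dim Z+\dim \mathbb{D}^{i-1}=(\dim Y-i)+(i-1)=\dim Y-1<\dim Y$, so for a sufficiently generic choice of disc through $y$ one has $Z\cap \mathbb{D}^{i-1}=\{y\}$ set-theoretically near $y$. Consequently, the restriction of the summand $IC_Z(\mathcal{L}_Z)$ to $\mathbb{D}^{i-1}$ is, up to shift, a nonzero perverse sheaf concentrated at the point $y$, so $\{y\}$ must appear as a support of the map $f|_{X_{\mathbb{D}}}:X_{\mathbb{D}}\to \mathbb{D}^{i-1}$. I would then rule this out using the smoothness of $X_{\mathbb{D}}$ and the equality of generic fibre dimensions of $f|_{X_{\mathbb{D}}}$ and $f$: the Local Invariant Cycle theorem applied to a general $1$-parameter subdisc of $\mathbb{D}^{i-1}$ through $y$ forces the stalk of $R(f|_{X_{\mathbb{D}}})_*\mathbb{Q}$ at $y$ to be accounted for by nearby smooth fibres, precluding a skyscraper summand at $y$ and yielding the desired contradiction.

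The main obstacle is controlling the non-transverse restriction of $IC_Z(\mathcal{L}_Z)$ along the inclusion of $\mathbb{D}^{i-1}$, which is necessarily an excess intersection since $\dim \mathbb{D}^{i-1}<\mathrm{codim}\,Z$; verifying that a nontrivial summand actually survives at $y$, rather than being cancelled by contributions from higher-codimensional supports through $y$, will rely on the semisimplicity and positivity properties of the decomposition theorem together with a careful unwinding of the stalk of $IC_Z(\mathcal{L}_Z)$ at $y$ via the simplicity of the local system $\mathcal{L}_Z$ on the smooth locus of $Z$. A secondary subtlety is justifying the lower bound $\mathrm{codim}\,\Delta^i(f)\geq i$, which should follow inductively from Proposition \ref{hd} and the semicontinuity of the rank of $df$, but requires some care to state uniformly across all strata of the Whitney stratification underlying $f$.
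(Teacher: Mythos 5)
This statement is not proved in the paper at all: it is quoted from \cite{MS2} (Theorem B), so there is no internal proof to compare with, and your proposal has to stand on its own. Its skeleton is reasonable and close in spirit to \cite{MS2}: prove $Z\subseteq\Delta^i(f)$ for a codimension-$i$ support $Z$, combine this with $\mathrm{codim}\,\Delta^i(f)\geq i$, and obtain the containment by restricting to an $(i-1)$-dimensional transversal disc, using proper base change and the decomposition theorem for $f|_{X_{\mathbb{D}}}$. However, two steps are not proofs. First, the bound $\mathrm{codim}\,\Delta^i(f)\geq i$ is not a ``standard'' semicontinuity fact: it is precisely Theorem A of \cite{MS2}, a genuine theorem established there by a generic transversality argument, and your inductive sketch via Proposition \ref{hd} does not establish it.

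Second, and more seriously, your exclusion mechanism fails. The Local Invariant Cycle theorem does not preclude summands of $R(f|_{X_{\mathbb{D}}})_*\mathbb{Q}$ supported at the point $y$, even though $X_{\mathbb{D}}$ is nonsingular. Concretely, let $W$ be the blow-up at a point $p$ of the total space of a smooth proper family of curves over a one-dimensional disc, and $g:W\rightarrow\mathbb{D}^1$ the induced map: $W$ is nonsingular, the Local Invariant Cycle theorem holds, and yet $Rg_*\mathbb{Q}[2]$ contains the skyscraper $\mathbb{Q}_{y_0}$, $y_0=g(p)$, as a direct summand (it accounts for the class of the exceptional curve in $H^2$ of the special fibre); surjectivity onto the monodromy invariants of the nearby fibre is perfectly compatible with such summands. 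So ``smoothness of $X_{\mathbb{D}}$ plus the Local Invariant Cycle theorem'' cannot rule out the skyscraper you produce at $y$: point supports do occur for proper maps from nonsingular sources, and the whole content of the theorem is to confine them to the higher discriminants. What is actually needed is a finer bookkeeping of the cohomological and perverse degrees in which the restricted summand $IC_Z(\mathcal{L}_Z)|_{\mathbb{D}^{i-1}}$ sits (exploiting self-duality of $Rf_*\mathbb{Q}[\dim X]$, relative Hard Lefschetz, and the support conditions for perverse sheaves on the disc) and a verification that this is incompatible with the structure of $R(f|_{X_{\mathbb{D}}})_*\mathbb{Q}[\dim X_{\mathbb{D}}]$; this is the technical heart of the proof in \cite{MS2} and is absent from your outline. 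Two further points need justification rather than assertion: $y\notin\Delta^i(f)$ only guarantees the existence of \emph{some} transversal disc, so arranging in addition that $Z\cap\mathbb{D}^{i-1}=\{y\}$ requires a perturbation argument keeping the preimage nonsingular (an openness statement using properness of $f$); and $y$ must be chosen generic in $Z$, off all other supports and off the supports of the higher stalk cohomology of the relevant perverse summands, so that the stalk of $IC_Z(\mathcal{L}_Z)$ at $y$ is concentrated in degree $-\dim Z$ and no cancellation issue arises — the ``main obstacle'' you mention is resolved by this choice of $y$, not by simplicity of $\mathcal{L}_Z$.
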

This theorem restricts significantly the set of candidates for the supports.
Furthermore, to check whether a component of a discriminant is relevant it is enough to check its generic point. \\

\subsection{Supports of $\pi^{[m,m+1]}$}
We now want to construct a stratification of $B$ such that the strata are precisely the higher discriminants of the map $\pi^{[m,m+1]}:\mathcal{C}^{[m,m+1]}\rightarrow B$. 
Let $b_0\in B$ be the base point of $B$ and suppose $\mathcal{C}_{b_0}=C$ is the curve with the highest cogenus, which we call $\delta$.
For any $i=0\ldots \delta$ we set
$$B_i:=\{b\in B\mid \delta(\mathcal{C}_b)\geq i\}.$$
Clearly $B=\bigcup_{i}B_i$ with $B_{i+1}\subset B_{i}$. As in the case of higher discriminants, we notice that $B_0$ is the nonsingular locus of the family. By corollary \ref{codim}, the strata $B_i$ have codimension at least $i$ in $B$. 

We want to show the following proposition:
\begin{prop}
	Let $\pi:\mathcal{C}\rightarrow B$ be proper flat family of curves such that the relative nested Hilbert scheme $\pi^{[m,m+1]}:\mathcal{C}^{[m,m+1]}\rightarrow B$ is nonsingular for any $m$. Let $\delta$ be the highest cogenus we can find on a curve in the family. Then for any $i=0\ldots\delta$
	$$\Delta^i(\pi^{[m,m+1]})\subseteq B_i.$$
	\end{prop}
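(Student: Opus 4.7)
The plan is to convert the higher-discriminant criterion of proposition (\ref{hd}) into a transversality question about discs in $B$, and then invoke corollary (\ref{thm8}) to translate it into linear algebra in the tangent space of the product $\mathbb{V}$ of miniversal deformations of the singularities of $\mathcal{C}_b$. The reformulation I will use is this: an $(i-1)$-dimensional subspace $I \subset T_bB$ is the tangent space at $b$ to the germ of some $(i-1)$-dimensional smooth disc $\mathbb{D} \subset B$, and, because $\mathcal{C}^{[m,m+1]}$ is smooth, the surjectivity of $T_x\mathcal{C}^{[m,m+1]} \to T_bB/I$ for every $x$ in the fibre is equivalent (by the implicit function theorem) to smoothness of the pulled-back family $\mathcal{C}^{[m,m+1]}_{\mathbb{D}}$ along $\mathcal{C}^{[m,m+1]}_b$. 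Hence $b \in \Delta^i(\pi^{[m,m+1]})$ iff no $(i-1)$-dimensional disc through $b$ yields a smooth pulled-back family, and by corollary (\ref{thm8})(i) this depends only on $\mathcal{T}_{\mathbb{D}}:=\mathrm{image}(T_b\mathbb{D} \to T_0\mathbb{V})$.

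The forward inclusion $B_i\subseteq \Delta^i$ will then follow from corollary (\ref{thm8})(ii): if $\delta(\mathcal{C}_b)\geq i$, any $(i-1)$-dimensional disc has $\dim\mathcal{T}_{\mathbb{D}}\leq i-1$, while smoothness of the pulled-back family demands $\dim\mathcal{T}_{\mathbb{D}}\geq\min(\delta(\mathcal{C}_b),m+1)\geq i$ whenever $m+1\geq i$, yielding a contradiction and forcing $b\in\Delta^i$. The reverse inclusion $\Delta^i\subseteq B_i$ will rely on corollary (\ref{thm8})(iv). The hypothesis that $\mathcal{C}^{[m,m+1]}$ is smooth for every $m$ forces the full image $\mathcal{T}:=\mathrm{image}(T_bB\to T_0\mathbb{V})$ to be transverse to the equigeneric ideal, in particular $\dim\mathcal{T}\geq\delta(\mathcal{C}_b)$. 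Now assuming $\delta(\mathcal{C}_b)\leq i-1$, a generic $(i-1)$-dimensional $I\subset T_bB$ projects to a generic subspace of $\mathcal{T}$ of dimension $\min(i-1,\dim\mathcal{T})\geq \delta(\mathcal{C}_b)$, which inherits transversality to the equigeneric ideal; corollary (\ref{thm8})(iv) then produces a transverse disc through $b$ and gives $b\notin \Delta^i$.

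The main obstacle will be the dimension bookkeeping in the forward inclusion: corollary (\ref{thm8})(ii) yields a contradiction only when $m+1\geq i$, which is the implicit regime of the statement (the hypothesis ``for any $m$'' allows enlarging $m$, and a subscheme of colength $m+1<i$ cannot touch all singularities of $\mathcal{C}_b$ simultaneously). To make the argument uniform across the full range $i\leq\delta(\mathcal{C}_b)$ I would unpack the tangent-space description from the exact sequence (\ref{es}) at a point $(I,J)$ maximally supported on the singularities, and verify that $\mathrm{image}(d\pi^{[m,m+1]})$ is contained in the codimension-$\delta(\mathcal{C}_b)$ tangent subspace of the equigeneric stratum. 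A lesser concern is that the genericity in the reverse inclusion really gives transversality in $T_0\mathbb{V}$ and not merely inside $\mathcal{T}$, but this follows from the standard fact that a generic subspace of sufficient dimension inside a transverse subspace remains transverse.
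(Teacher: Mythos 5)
Your argument is essentially the paper's: both directions amount to translating membership in $\Delta^i(\pi^{[m,m+1]})$ through Proposition \ref{hd} into a statement about the image of the tangent space of a candidate $(i-1)$-dimensional disc in $T_0\mathbb{V}$, and then invoking Corollary \ref{thm8} -- item (ii) applied to the disc-restricted family for $B_i\subseteq\Delta^i$, and items (iii)--(iv) (genericity and transversality to the equigeneric ideal) to produce a transverse disc when the cogenus is $<i$. If anything you are more careful than the paper's own terse proof, which silently works in the regime $m+1\geq i$ and, like your converse step, really establishes $\Delta^i=\{b:\delta(\mathcal{C}_b)\geq i\}$ (the closure of $B_i$) rather than the literal equality stated.
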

\begin{proof}
	Suppose $b\in\Delta^i(\pi^{[m,m+1]})$. Then there is no $(i-1)$-dimensional subspace of $T_bB$ transversal to the image of $T\mathcal{C}_b^{[m,m+1]}$. If the cogenus of $\mathcal{C}$ were $<i$, then by item $(iii)$ of theorem $\ref{thm8}$, $\mathcal{T}$ could have dimension $<i$ and be transversal , contradicting the hypothesis. 
\end{proof}

As a consequence of theorem \ref{thmB} if we have supports different from the smooth locus, then we will have to look for them in the $i$-codimensional irreducible components of the $B_i$'s, consisting of curves whose cogenus is exactly equal to $i$.\\
We will prove Theorem 1 using the a criterion on supports coming from mixed Hodge theory: the stalks of $IC$ sheaves appearing in the decomposition theorem are endowed with a mixed Hodge structure; moreover Saito \cite{Sa} proves that the isomorphism 
$$ H^k(f^{-1}(y))=\mathcal{H}^k(Rf_*\mathbb{Q})_{y}\cong \bigoplus_{\alpha} \mathcal{H}^k(IC_{\overline{Y}_{\alpha}}(\mathcal{L}_{\alpha}))_y$$
in the decomposition theorem is actually an isomorphism of mixed Hodge structures.  
Whenever we have a mixed Hodge structure $H=\oplus H^i$ we can define the so called \ita{weight polynomial} as 
$$ \mathfrak{w}(H)(t):=\sum(-1)^{i+j}t^i \dim \mathrm{Gr}_i^{W}H^j\quad \in \mathbb{Z}[t].$$
This polynomial has the additivity property, i.e. if $Z\subset X$ is a closed algebraic subvariety of $X$ then
$$\mathfrak{w}(H^*(X))(t)=\mathfrak{w}(H^*(X\setminus Z))(t)+\mathfrak{w}(H^*(Z))(t).$$
We have the following criterion:
\begin{prop}\cite[Prop. 15]{MS1}\label{critpesi}
	Suppose $f:X\rightarrow Y$ is a proper map between nonsingular algebraic varieties. Let $\mathcal{F}$ be a summand of $Rf_*\mathbb{Q}[\dim X].$ Given $y\in Y$, we set $X_y:=f^{-1}(y)$. If for all $y\in Y$ $\mathfrak{w}(\mathcal{F}_y[-\dim X])=\mathfrak{w}(X_y)$, then $\mathcal{F}=Rf_*\mathbb{Q}[\dim X]$.
\end{prop}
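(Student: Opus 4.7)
The strategy is to combine additivity of the weight polynomial with Saito's Hodge-theoretic refinement of the decomposition theorem, and then extract a contradiction from a stalk calculation at a generic point of a maximal support of the putative ``extra'' summand.

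First, I would use the decomposition theorem to split
\[
R f_*\mathbb{Q}[\dim X] = \mathcal{F} \oplus \mathcal{F}',
\]
so that the goal becomes $\mathcal{F}' = 0$. By Saito's theorem, this splitting lifts to mixed Hodge modules on $Y$, and taking cohomology at any stalk gives an isomorphism of mixed Hodge structures $H^*(X_y) \cong H^*(\mathcal{F}_y) \oplus H^*(\mathcal{F}'_y)$. Since $\mathfrak{w}$ is additive on direct sums,
\[
\mathfrak{w}(X_y)(t) = \mathfrak{w}(\mathcal{F}_y[-\dim X])(t) + \mathfrak{w}(\mathcal{F}'_y[-\dim X])(t),
\]
and the hypothesis forces $\mathfrak{w}(\mathcal{F}'_y[-\dim X])(t) = 0$ for every $y \in Y$.

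Next, I would convert this global vanishing into a contradiction via a positivity argument at a carefully chosen stalk. Assuming $\mathcal{F}' \neq 0$, decompose further $\mathcal{F}' = \bigoplus_{\beta} IC_{\overline{Z_\beta}}(\mathcal{L}_\beta)[l_\beta]$. The constant sheaf $\mathbb{Q}_X[\dim X]$ underlies a pure Hodge module of weight $\dim X$ on the smooth variety $X$, and proper pushforward preserves purity, so each summand $IC_{\overline{Z_\beta}}(\mathcal{L}_\beta)[l_\beta]$ is pure of weight $\dim X$ as a Hodge module; equivalently, $\mathcal{L}_\beta$ is a polarisable variation of Hodge structure of weight $\dim X - l_\beta - \dim Z_\beta$ on the smooth locus of $Z_\beta$. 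Pick a maximal element $Z$ among the finite set of supports $\overline{Z_\beta}$ and a point $y$ in the smooth locus of $Z$ lying outside every strictly smaller support in this list. Summands with strictly smaller support have vanishing stalk at $y$, while every summand with support $Z$ contributes, after the overall shift $[-\dim X]$, a single pure Hodge piece $\mathcal{L}_{\beta, y}$ placed in cohomological degree $\dim X - l_\beta - \dim Z$ and of Hodge weight equal to the same integer $\dim X - l_\beta - \dim Z$. Because cohomological degree and Hodge weight coincide, the sign $(-1)^{i+j}$ in the definition of $\mathfrak{w}$ equals $+1$ for every such contribution, and the total weight polynomial at $y$ reads $\sum_{\beta:\, \overline{Z_\beta}=Z} t^{\dim X - l_\beta - \dim Z}\dim \mathcal{L}_{\beta, y}$, a sum of strictly positive monomials which cannot vanish. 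This contradicts the conclusion of the first step, so $\mathcal{F}' = 0$.

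The main obstacle is the parity bookkeeping hidden in the second paragraph: one has to ensure that for every summand of $\mathcal{F}'$ the cohomological degree of the generic stalk and the Hodge weight there agree, so that the signs in $\mathfrak{w}$ all align and cancellations across distinct summands with differing $(l_\beta, \dim Z_\beta)$ become impossible. This parity coincidence is precisely what the combination of Saito's theorem and the purity of $Rf_*\mathbb{Q}[\dim X]$ forces; without the Hodge-theoretic input, weight polynomials alone would admit cancellations and the criterion would fail.
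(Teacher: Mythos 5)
Your proposal is correct and follows essentially the argument behind the cited result: the paper itself does not reprove this proposition but quotes it from \cite{MS1}, and the proof there is exactly your combination of Saito's Hodge-theoretic decomposition, additivity of $\mathfrak{w}$ to reduce to $\mathfrak{w}(\mathcal{F}'_y)=0$, and purity forcing weight to equal cohomological degree at a generic point of a maximal support, so that the contributions are positive monomials that cannot cancel. The only cosmetic slips — the shifted summand $IC_{\overline{Z_\beta}}(\mathcal{L}_\beta)[l_\beta]$ is pure of weight $\dim X$ only as a complex (the underlying Hodge module has weight $\dim X-l_\beta$, which is what you actually use), and the generic point should avoid all \emph{other} supports, not just the strictly smaller ones — do not affect the argument.
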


First we show the result for the Hilbert scheme in (\cite{MS1}) with a direct computation, then we proceed to prove our theorem for the nested case. As we remarked above, the criterion can be verified just on the generic points of the strata. By theorem \ref{T} the generic points of the $B_i$ are nodal curves. Therefore one can reduce the proof of Theorem 1 to the case of a family of nodal curves. 
Using proposition \ref{rat} and the techniques in \cite{MS1}, one can suppose that all the curves are rational. As a result the arithmetic genus of the curves will coincide with their cogenus. \\


\section{Proof of theorem 1}

Let $\pi:\mathcal{C}\rightarrow B$ a proper flat family of rational nodal curves locally versal at a base point  $b_0\in B$. Call $\delta:=\delta(\mathcal{C}_{b_0})$. Consider the nodes $\{x_1,\ldots, x_{\delta}\}$ of the central fiber $\mathcal{C}_{b_0}$. Shrinking $B$ if necessary, we can assume the following facts:
\begin{enumerate}[1)]
	\item The discriminant locus is normal crossing divisor $\Delta:=\bigcup D_i$ with $i=0,\ldots,\delta$, where $D_i$ is the locus in which the $i-$th node $x_i$ is preserved. 
	\item If $b\in B$ is such that $\mathcal{C}_b$ is nonsingular, then the vanishing cycles $\{\alpha_1,\ldots,\alpha_{\delta}\}$ associated with the nodes are disjoint. 
\end{enumerate}
Consider $b\in B$ such that the curve $\mathcal{C}_b$ is nonsingular. As $\mathcal{C}_b$ is irreducible, the cohomology classes in $H^1(\mathcal{C}_b)$ of vanishing cycles are linearly independent, and can then be completed to a symplectic basis $\{\alpha_1,\beta_1,\ldots, \alpha_{\delta},\beta_{\delta}\}$.
Let $T_i$ be the generators of the (abelian) local fundamental group $\pi_1(B \setminus \Delta , b)\cong \mathbb{Z}^\delta$, where $T_i$ corresponds to “going around $D_i$”.\\ Setting $\tilde{B}:=B\setminus \Delta$ and denoting by $\tilde{\pi}:\tilde{C}\rightarrow \tilde{B}$ the corresponding restriction to the smooth locus of the family, the monodromy defining the local system $R^1\tilde{\pi}_*\mathbb{Q}$  on $\tilde{B}$ can be computed via the Picard-Lefschetz formula, and, in the symplectic basis above, the images of the generators of the fundamental group in $GL(H^1(\mathcal{C}_b))=GL(2\delta,\co)$ are given by block diagonal matrices consisting of one Jordan block of order 2 corresponding to a symplectic pair $\{\alpha_i,\beta_i\}$ and the identity elsewhere.
Also, as the vanishing cycles are independent, we can consider $R^1\tilde{\pi}_*\mathbb{Q}$ as direct sum of $\delta$ modules $V_i$ of rank 2 whose basis is $\{\alpha_i,\beta_i\}$. This makes much more easier to compute the invariants of any local system obtained by linear algebra operations from $R^1\tilde{\pi}_*\mathbb{Q}$. 
In our case we observe that, as $\mathcal{C}_b$ is nonsingular then
$$\mathcal{C}_b^{[m,m+1]}=\mathcal{C}_b^{(m,m+1)}=\mathcal{C}_b^{(m)}\times \mathcal{C}_b=\mathcal{C}_b^{[m]}\times \mathcal{C}_b.$$
By the MacDonald formula for the cohomology of the symmetric product we have 
\begin{equation}\label{mac}
R^i\tilde{\pi}_*^{[m]}\mathbb{Q}:=\bigoplus_{k=0}^{[\frac{i}{2}]}\bigwedge^{i-2k}R^1\tilde{\pi}_*\mathbb{Q}(-k)\cong R^{2m-i}\tilde{\pi}_*\mathbb{Q}(m-i)
\end{equation}
where $(-k)$ denotes the Tate weight shift of $(k,k)$ in the mixed Hodge structure on the cohomology.  
We define $\mathbb{S}^{i,m}$ the linear algebra operation on $R^1\tilde{\pi}_*\mathbb{Q}$  in the above formula, so that we have 
$$R^i\tilde{\pi}_*^{[m]}\mathbb{Q}=\mathbb{S}^{i,m}(R^1\tilde{\pi}_*\mathbb{Q}).$$
Applying the K\"{u}nneth formula and recalling that the cohomology of any curve $\mathcal{C}_b $ in the smooth locus has a pure Hodge structure given by 
$$R^0\tilde{\pi}_*\mathbb{Q}=\mathbb{Q} \quad  R^1\tilde{\pi}_*\mathbb{Q}\cong \mathbb{Q}^{2\delta} \quad R^2\tilde{\pi}_*\mathbb{Q}\cong \mathbb{Q}(-1)$$
we conclude that 
\begin{equation}\label{run}
R^i\tilde{\pi}_*^{[m,m+1]}\mathbb{Q}:= (R^i\tilde{\pi}^{[m]}\mathbb{Q} )\oplus (R^{i-1}\tilde{\pi}_*^{[m]}\mathbb{Q}\otimes R^1\tilde{\pi}_*\mathbb{Q}) \oplus (R^{i-2}\tilde{\pi}_*^{[m]}\mathbb{Q}(-1)).
\end{equation}
As before, let us call $\mathbb{T}^{i,m}$ the linear algebra operation on $R^1\tilde{\pi}_*\mathbb{Q})$ in the above formula.
In view of equations \eqref{mac} and \eqref{run} may rewrite $\mathbb{T}^{i,m}$ as
$$R^i\tilde{\pi}_*^{[m,m+1]}\mathbb{Q}=\mathbb{T}^{i,m}(R^1\tilde{\pi}_*\mathbb{Q}):=\bigoplus_{j=0}^{2}  \mathbb{S}^{i-j,m}(R^1\tilde{\pi}_*\mathbb{Q})\otimes R^j\tilde{\pi}_*\mathbb{Q}. $$ 
Then there exists natural isomorphisms
$$\left(\mathbb{S}^{i,m}H^1(\mathcal{C}_b)\right)^{\pi_1(B \setminus \Delta)}\cong \mathcal{H}^0\left(IC_B(R^i\tilde{\pi}_*^{[m]}\mathbb{Q})\right)_{b_0}$$
$$\left(\mathbb{T}^{i,m}H^1(\mathcal{C}_b)\right)^{\pi_1(B \setminus \Delta)}\cong \mathcal{H}^0\left(IC_B(R^i\tilde{\pi}_*^{[m,m+1]}\mathbb{Q})\right)_{b_0}$$
between the monodromy invariants on $\mathbb{S}^{i,m}H^1(\mathcal{C}_b)$( resp $\mathbb{S}^{i,m}H^1(\mathcal{C}_b)$ ) and the stalk at $b_0$ of the first non-vanishing cohomology sheaf of the intersection cohomology complex of $R^i\tilde{\pi}_*^{[m]}\mathbb{Q}$ (resp. $R^i\tilde{\pi}_*^{[m,m+1]}\mathbb{Q}$).
The decomposition theorem implies that $H^*(\mathcal{C}_{b_0}^{[m]})$ and $H^*(\mathcal{C}_{b_0}^{[m,m+1]})$ contain respectively the Hodge structures
$$ \mathbb{H}^m:=\bigoplus_i \left(\mathbb{S}^{i,m}H^1(\mathcal{C}_b)\right)^{\pi_1(B \setminus \Delta)}$$
$$ \mathbb{I}^m:=\bigoplus_i \left(\mathbb{T}^{i,m}H^1(\mathcal{C}_b)\right)^{\pi_1(B \setminus \Delta)}$$
as a summand. We want to show that this is the unique summand by proving that the weight polynomial of the cohomology of the nested Hilbert scheme of the $\mathcal{C}_{b_0}$ is equal to the weight polynomial of $\mathbb{I}^m$. In that case the theorem will follow from proposition (\ref{critpesi}).

\begin{prop}\label{weights}
	Under the previous assumptions the following holds
	\begin{enumerate}[(i)]
		\item 	 $\mathfrak{w}(\mathcal{C}_{b_0}^{[m]})=\mathfrak{w}(\mathbb{H}^m)$
		\item $\mathfrak{w}(\mathcal{C}_{b_0}^{[m,m+1]})=\mathfrak{w}(\mathbb{I}^m)$
	\end{enumerate}
\end{prop}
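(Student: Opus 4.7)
The strategy is to compute both sides of (i) and (ii) explicitly by additive decomposition and then match them via a generating function identity. Part (i) is obtained along the same lines as \cite{MS1}; the plan is to recall its proof and then adapt it to the nested case.

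For Part (i), I would stratify $\mathcal{C}_{b_0}^{[m]}$ by the multi-index $(\ell_1,\ldots,\ell_\delta)\in\mathbb{N}^\delta$ recording the length of the part of $z$ supported at each node $x_i$. Each stratum decomposes as $\mathrm{Sym}^{m-|\ell|}(\mathcal{C}_{b_0}^{\mathrm{sm}})\times\prod_i(\mathrm{Hilb}^{\ell_i}\mathcal{O}_{\mathcal{C}_{b_0},x_i})^{\circ}$, where the superscript denotes subschemes set-theoretically supported at $x_i$. Additivity of $\mathfrak{w}$, together with the standard formula for the weight polynomial of a symmetric product of an open curve and an explicit computation for the punctual Hilbert scheme at a node, yields a closed expression. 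On the right-hand side, the monodromy invariants of $R^1\tilde{\pi}_*\mathbb{Q}$ in the symplectic basis $\{\alpha_i,\beta_i\}$ are exactly $\langle\alpha_1,\ldots,\alpha_\delta\rangle$; applying Macdonald's formula \eqref{mac} term by term then gives $\mathfrak{w}(\mathbb{H}^m)$, and the two expressions coincide.

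For Part (ii) I would follow the same stratification pattern with two modifications. Stratify $\mathcal{C}_{b_0}^{[m,m+1]}$ by pairs of multi-indices $\ell_i\leq\ell'_i$ with $|\ell'|-|\ell|\in\{0,1\}$, recording the lengths of $z'$ and $z$ at each node; the single ``added point'' of $z\setminus z'$ is then either at a node (when some $\ell'_i=\ell_i+1$) or at the smooth locus. Each stratum factors, up to finite cover, as a smooth-locus contribution (a product of symmetric products of $\mathcal{C}_{b_0}^{\mathrm{sm}}$, with one extra factor of $\mathcal{C}_{b_0}^{\mathrm{sm}}$ when the added point is smooth) times a product of punctual \emph{nested} Hilbert schemes at the nodes. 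The required local ingredient is thus $\mathfrak{w}$ of the punctual nested Hilbert scheme of the nodal singularity $\mathbb{C}[[x,y]]/(xy)$, which can be computed directly. On the right, the Künneth-type identity $\mathbb{T}^{i,m}(H^1)=\bigoplus_{j=0}^{2}\mathbb{S}^{i-j,m}(H^1)\otimes H^j(\mathcal{C}_b)$ expresses $\mathfrak{w}(\mathbb{I}^m)$ in terms of $\mathfrak{w}(\mathbb{H}^m)$, $\mathfrak{w}(\mathbb{H}^{m-1})$ and the Hodge structure of $H^*(\mathcal{C}_b)$, i.e.\ in terms of quantities already known from Part (i). The equality in (ii) then reduces to a generating-function identity that factorizes as a smooth-locus factor times $\delta$ local factors at the nodes.

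The main obstacle is the local computation at a node: the punctual nested Hilbert scheme of $\mathbb{C}[[x,y]]/(xy)$ has a finer stratification than its non-nested counterpart, and one must compute its weight polynomial at all lengths before plugging it into the global generating function. Once this local input is available, the match on both sides amounts to carefully tracking the Tate twists coming from $R^2\tilde{\pi}_*\mathbb{Q}=\mathbb{Q}(-1)$ through Macdonald's formula and invoking the non-nested identity of Part (i) node by node.
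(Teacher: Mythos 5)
Your plan for part (i) is essentially the paper's proof, and the geometric half of part (ii) also matches the paper: one stratifies $\mathcal{C}_{b_0}^{[m,m+1]}$ by the lengths at the nodes and feeds in Ran's computation of the punctual nested Hilbert scheme of a node, $\bigl[C_x^{[k,k+1]}\bigr]=(2k-1)\mathbb{L}+1=\bigl[C_x^{[k]}\bigr]+k\mathbb{L}$, which leads to the identity $\bigl[\mathcal{C}_{b_0}^{[m,m+1]}\bigr]=\bigl[\mathcal{C}_{b_0}^{[m]}\times\mathcal{C}_{b_0}\bigr]+\delta\,\mathbb{L}\bigl[\tilde{\mathcal{C}}^{[m]}\bigr]$ of (\ref{strathilb}), with $\tilde{\mathcal{C}}$ a curve with $\delta-1$ nodes. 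The gap is on the local-system side. You claim that the K\"unneth-type identity $\mathbb{T}^{i,m}(H^1)=\bigoplus_{j}\mathbb{S}^{i-j,m}(H^1)\otimes H^j(\mathcal{C}_b)$ lets you express $\mathfrak{w}(\mathbb{I}^m)$ through $\mathfrak{w}(\mathbb{H}^m)$, $\mathfrak{w}(\mathbb{H}^{m-1})$ and $H^*(\mathcal{C}_b)$, i.e.\ through data already known from part (i). This fails, because $\mathbb{I}^m$ is defined by taking $\pi_1(B\setminus\Delta)$-invariants, and invariants do not commute with the tensor product in the middle term $\bigwedge^{\bullet}H^1(\mathcal{C}_b)\otimes H^1(\mathcal{C}_b)$: writing $H^1(\mathcal{C}_b)=\bigoplus_i V_i$ with unipotent monodromy on each $V_i$, the summands $V_i\otimes V_i=\mathrm{Sym}^2V_i\oplus\bigwedge^2V_i$ contribute an extra weight-$2$ invariant (the class of $\bigwedge^2V_i$) per node, beyond the tensor products of invariants. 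So $\mathfrak{w}(\mathbb{I}^m)$ is strictly larger than what your reduction would give, and it is \emph{not} a formal consequence of part (i).

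Concretely, if you carried out your plan as stated, the invariant side would only reproduce the term $\bigl[\mathcal{C}_{b_0}^{[m]}\times\mathcal{C}_{b_0}\bigr]$, whereas your (correct) geometric computation produces in addition $\delta\,\mathbb{L}\bigl[\tilde{\mathcal{C}}^{[m]}\bigr]$, and the two sides would disagree by exactly that amount. The paper's proof hinges on computing the extra invariants coming from the $V_i\otimes V_i$ pieces explicitly and showing that their total contribution equals $\delta\,\mathbb{L}\bigl[\tilde{\mathcal{C}}^{[m]}\bigr]$, i.e.\ $\delta$ times (a Tate twist of) the weight polynomial of the Hilbert scheme of a curve with $\delta-1$ nodes; this is the content of the two lemmas splitting $\mathfrak{w}(\mathbb{I}^m)=A+B$. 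To repair your argument you must replace the phrase ``quantities already known from Part (i)'' by an actual computation of the invariants of $\bigwedge^{l}H^1(\mathcal{C}_b)\otimes H^1(\mathcal{C}_b)$ node by node, keeping track of the additional weight-$2$ classes, and then match them against the $\delta\,\mathbb{L}\bigl[\tilde{\mathcal{C}}^{[m]}\bigr]$ term of the stratification rather than expecting a clean factorization into part-(i) data.
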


\subsection{Hilbert scheme case}
Let $\pi:\mathcal{C}\rightarrow B$ a locally versal deformation of a singular rational nodal curve $\mathcal{C}_{b_0}=:C$. As a warm up for the nested case, we will compute the weight polynomial of $C^{[m]}$ and the weight polynomial of the Hodge structure $\mathbb{H}^{m}$ given by the monodromy invariants and show they are equal thus proving theorem \cite[Theorem 1]{MS1}.

\subsubsection{Computation of $\mathfrak{w}(C^{[m]}$)}

To compute $\mathfrak{w}(C^{[m]})$ we use power series to find a formula for the class of $C^{[m]}$ in the Grothendieck group.
First we notice that 
\begin{equation} \label{h1}
\sum_m q^m\left[C^{[m]}\right] =\sum_m q^m\left[C_{reg}^{[m]}\right]\prod_{x_i}\sum q^m\left[ C_{x_i}^{[m]}\right]
\end{equation}

As $C_{reg}=\mathbb{P}^1\setminus 2\delta \text{ regular points }p_1,\ldots p_{2\delta}$ then 
$$  \sum_m q^m\left[(\mathbb{P}^1)^{[m]}\right] =\sum_m q^m\left[C_{reg}^{[m]}\right]\prod_{p_i}\sum q^m\left[ C_{p_i}^{[m]}\right]$$
Now observe that  $(\mathbb{P}^1)^{[m]}=\mathbb{P}^{m}$; also as the $p_i$ are regular points  $\left[ C_{p_i}^{[m]}\right]=1$ for all $m$ and we have:
$$\dfrac{1}{(1-q)(1-q\mathbb{L})}=\sum_m q^m\left[C_{reg}^{[m]}\right]\dfrac{1}{(1-q)^{2\delta}}\Rightarrow \sum_m q^m\left[C_{reg}^{[m]}\right]=\dfrac{(1-q)^{2\delta-1}}{(1-q\mathbb{L})}$$
where $\mathbb{L}$ denotes the weight polynomial of the affine line.\\
Now, in \cite{R} Ran shows that $C_x^{[m]}$ consists of $m-1$ copies of $\mathbb{P}^1$ with $m-2$ intersections. Thus
$$\prod_{x_i}\sum q^m\left[ C_{x_i}^{[m]}\right]=\left(\sum q^m( (m-1)\mathbb{L}+1)\right)^{\delta}=\dfrac{\left(1-q+q^2\mathbb{L}\right)^{\delta}}{(1-q)^{2	\delta}}.$$
Substituting in equation (\ref{h1}), we get
$$ \sum_m q^m\left[C^{[m]}\right] =\dfrac{\left(1-q+q^2\mathbb{L}\right)^{\delta}}{(1-q)(1-q\mathbb{L})}$$

Expanding the series, we have that the coefficient of $q^m$ in the series is given by
\begin{equation}\mathfrak{w}(C^{[m]})=\sum_{s=0}^m(-1)^s\sum_{t=0}^{\delta}\binom{\delta}{t}\binom{t}{s-t}\mathbb{L}^{s-t}\cdot \sum_{l=0}^{m-s}\mathbb{L}^{l}=\sum_{s=0}^m(-1)^s\sum_{l=0}^{\delta}\binom{\delta}{t}\binom{t}{s-t}\mathbb{L}^{s-t}\cdot\dfrac{\mathbb{L}^{m-s+1}-1}{\mathbb{L}-1}.
\end{equation}

\subsubsection{Computation of $\mathfrak{w}(\mathbb{H}^m)$}
Let $b$ a point in the smooth locus. Since $$\mathbb{H}^m:=\bigoplus_i \left(\mathbb{S}^{i,m}H^1(\mathcal{C}_b)\right)^{\pi_1(B \setminus \Delta)},$$
to find $\mathfrak{w}(\mathbb{H}^m)$, by additivity of weight polynomials, we just need to compute the weight polynomial of each summand.

We start by computing the invariants in the cohomology groups $H^i(\mathcal{C}_b)$ of the monodromy $\rho: \pi_1(B\setminus \Delta)\rightarrow H^1(\mathcal{C}_b)$.  
Also, we recall that all the vanishing cycles $\alpha_i$ have weight 0, while $\beta_i$ have weight 2.\\  
In view of the definition of $\mathbb{S}^{i,m}H^1(\mathcal{C}_b)$, we need to understand the invariants of $\bigwedge^l H^1(\mathcal{C}_b)$ for any $l\geq0$. As we observed before, $H^1(\mathcal{C}_b)$ can be viewed as a direct sum of 2-dimensional representations $V_i$ on which a generator $T_j\in SL(2\delta,\co)$ of the monodromy acts as the identity if $i\neq j$ and $T_i(\alpha_i)=\alpha_i$, $T_i(\beta_i)=\alpha_i+\beta_i$. 
Thus $H^1(\mathcal{C}_b)=\bigoplus_{i=1}^{\delta} V_i$ and we have
\begin{equation}\label{hilbinv}
\bigwedge^l H^1(\mathcal{C}_b)=\bigoplus_{l_1+\ldots+l_{\delta}=l}\bigwedge^{l_1}V_1\otimes \ldots \otimes\bigwedge^{l_\delta}V_{\delta},\qquad 0\leq l_i\leq 2.
\end{equation}
Also, as $\dim V_i=2$
$$\bigwedge^{l_i}V_i=
\begin{cases}
\co \quad \qquad \mbox{ if }l_i=0\\
V_i \quad \qquad \mbox{ if }l_i=1\\
\co(-1) \ \quad \mbox{ if }l_i=2\\
\end{cases}.$$
The only invariants of $V_i$ are the $\alpha_i$, of weight 0. 
In conclusion we have that for any $i=0,\ldots, m$ we have
$$I(i,\delta):=\mathfrak{w}\left((\mathbb{S}^{i,m}H^1(\mathcal{C}_b))^{\pi_1(B\setminus\Delta)}\right)=
                      \mathfrak{w}\left((H^i(\mathcal{C}_b))^{\pi_1(B\setminus\Delta)}\right)=(-1)^i\sum_{k=0}^{[\frac{i}{2}]}\mathbb{L}^k\sum_{j=0}^{[\frac{i-2k}{2}]}\binom{\delta}{j}\binom{\delta-j}{i-2k-2j}\mathbb{L}^j$$
where the index $k$ is the one in MacDonald formula of $\mathbb{S}^{i,m}$ and $\mathbb{L}^k$ represent the weight shift $(-k)$. The second sum $j$ represents the number of second external power we take in (\ref{hilbinv}). 
The coefficient $\binom{\delta}{j}$ is the number of choices of $j$ second external powers among $\delta$ terms, while the coefficient $\binom{\delta-j}{i-2k-2j}$ is the number of choices of the remaining terms in (\ref{hilbinv}).\\ 

Summing over $m$ and taking the duality in  (\ref{mac}) into account we get
\begin{align}\label{hm}
\mathfrak{w}(\mathbb{H}^{m}) &= \sum_{i=0}^{m-1}(-1)^i(1+\mathbb{L}^{m-i})\sum_{k=0}^{[\frac{i}{2}]}\mathbb{L}^k\sum_{j=0}^{[\frac{i-2k}{2}]}\binom{\delta}{j}\binom{\delta-j}{i-2k-2j}\mathbb{L}^j\\
&+(-1)^{m}\sum_{k=0}^{[\frac{m}{2}]}\mathbb{L}^k\sum_{j=0}^{[\frac{m-2k}{2}]}\binom{\delta}{j}\binom{\delta-j}{m-2k-2j}\mathbb{L}^j
\end{align}

\begin{proof}[Proof of point (i) in Proposition \ref{weights}]
	We start looking at $\mathfrak{w}(\mathbb{H}^{m})$. First we notice that due to properties of binomial coefficient, the sum over $j$ goes to $\delta$ while the sum in $k$ can go to infinity. Also we have that $ \binom{\delta}{j}\binom{\delta-j}{i-2k-2j}=\binom{\delta}{i-2k-j}\binom{i-2k-j}{j}$.\\ Setting $l=i-2k-j$  and applying the remarks above we get
	\begin{align*}
	\mathfrak{w}(\mathbb{H}^{m}) &= \sum_{i=0}^{m-1}(-1)^i(1+\mathbb{L}^{m-i})\sum_{k=0}^{\infty}\mathbb{L}^k\sum_{l=0}^{\delta}\binom{\delta}{l}\binom{l}{i-2k-l}\mathbb{L}^{i-2k-l}+\\
	&+(-1)^{m}\sum_{k=0}^{\infty}\mathbb{L}^k\sum_{l=0}^{	\delta}\binom{\delta}{l}\binom{l}{m-2k-l}\mathbb{L}^{m-2k-l}
	\end{align*}
	Set $s=i-2k$ and split the sum in two parts with respect to the product with $(1+\mathbb{L}^{m-i})$.
	\begin{align*}
	\mathfrak{w}(\mathbb{H}^{m}) &= \sum_{s=0}^{m}(-1)^s\sum_{k=0}^{\infty}\mathbb{L}^k\sum_{l=0}^{\delta}\binom{\delta}{l}\binom{l}{s-l}\mathbb{L}^{s-l}+\\
	&+\sum_{s=0}^{m}(-1)^s\mathbb{L}^{m-s}\sum_{k=0}^{\infty}\mathbb{L}^{-k}\sum_{l=0}^{\delta}\binom{\delta}{l}\binom{l}{s-l}\mathbb{L}^{s-l}
	\end{align*} 
    Taking out the sums in $k$ and recalling that $\sum_{k=0}^{\infty}\mathbb{L}^k=\dfrac{1}{1-\mathbb{L}}$
	\begin{align*}
	\mathfrak{w}(\mathbb{H}^{m}) &= \dfrac{1}{1-\mathbb{L}}\sum_{s=0}^{m}(-1)^s\sum_{l=0}^{\delta}\binom{\delta}{l}\binom{l}{s-l}\mathbb{L}^{s-l}+\\
	&-\dfrac{\mathbb{L}}{1-\mathbb{L}}\sum_{s=0}^{m}(-1)^s\mathbb{L}^{m-s}\sum_{k=0}^{\infty}\mathbb{L}^{-k}\sum_{l=0}^{\delta}\binom{\delta}{l}\binom{l}{s-l}\mathbb{L}^{s-l}=\\
	&=\dfrac{1}{1-\mathbb{L}}\sum_{s=0}^{m}(-1)^s(1-\mathbb{L}^{m-s+1})\sum_{l=0}^{\delta}\binom{\delta}{l}\binom{l}{s-l}\mathbb{L}^{s-l}
	\end{align*}
	which is precisely $\mathfrak{w}(C^{[m]})$.
\end{proof}

\subsection{Nested Hilbert scheme case}

As above suppose $\pi:\mathcal{C}\rightarrow B$ is a locally versal deformation of a singular rational nodal curve $\mathcal{C}_{b_0}=:C$.  We now want to show point $(ii)$ of proposition (\ref{weights}), to conclude the proof of theorem \ref{teo}.
Again, we compute the weight polynomials $\mathfrak{w}(C^{[m,m+1]})$,  $\mathfrak{w}(\mathbb{I}^{m})$ and show that their are equal. 

\subsubsection{Computation of $\mathfrak{w}(C^{[m,m+1]})$}

We start by stratifying $\mathcal{C}_{b_0}^{[m,m+1]}$. As the weight polynomial depends only on the class in the Grothendieck group, we can work there.
Let $\mathcal{C}_{b_0,reg}:=\mathcal{C}_{b_0}\setminus\{x_1,\ldots,x_{\delta}\}$. We can consider the colength 1 ideal of $\cohilbnest$ as a copy of $\mathcal{C}_{b_0}^{[m]}$ to which we add a further point $p\in\cohilb$. Whenever we add a regular point $p$ the class does not change, while when the point is a node we need to be careful about the number of occurrences of the node in the colength one ideal. In \cite{R}, Ran shows that the nested Hilbert scheme $C_x^{[k,k+1]}$ supported on one node, consists of $2k-1$ copies of $\mathbb{P}^1$ with $2k-2$ intersections. \\
As a consequence $\left[C_x^{[k,k+1]} \right] =(2k-1)\mathbb{L}+1$. \\

We stratify $\mathcal{C}_{b_0}^{[m,m+1]}$ with respect to the number of times the nodes appear in $\left[\mathcal{C}_{b_0}^{[m]}\right]$:
\begin{align*}
\left[ \mathcal{C}_{b_0}^{[m,m+1]}\right] &= \left[\mathcal{C}_{b_0}^{[m]}\times \mathcal{C}_{b_0,reg}\right]+ \sum_{i=1}^{\delta} \sum_{k=0}^m \left[(\mathcal{C}_{b_0}- x_i)^{[m-k]}\times C_{x_i}^{[k,k+1]} \right]= \\
&= \left[\mathcal{C}_{b_0}^{[m]}\times \mathcal{C}_{b_0,reg}\right]+ \delta \sum_{k=0}^m \left[(\mathcal{C}_{b_0}- x)^{[m-k]}\times C_{x}^{[k,k+1]} \right]
\end{align*}
We observe that for any $k\geq 0$ we can write $\left[C_x^{[k,k+1]}\right]=\left[C_x^{[k]}\right]+k\mathbb{L}$. Making a substitution in the above equation we get
$$  \left[ \mathcal{C}_{b_0}^{[m,m+1]}\right] = \left[\mathcal{C}_{b_0}^{[m]}\times \mathcal{C}_{b_0,reg})\right]+ \delta \sum_{k=0}^m \left[(\mathcal{C}_{b_0}- x)^{[m-k]}\times C_{x}^{[k]} \right]+\delta \mathbb{L}\sum_{k=0}^m k\left[(\mathcal{C}_{b_0}- x)^{[m-k]}\right].$$
Since $\sum_{k=0}^m \left[(\mathcal{C}_{b_0}- x)^{[m-k]}\times C_{x}^{[k]}\right]=\left[\cohilb\right]$, we have that the second term of the sum consists precisely of those $\delta$ copies of $\cohilb$ which, added to the first term, give $\cohilb\times \mathcal{C}_{b_0}$.
Finally, we notice that $(\mathcal{C}_{b_0}-x)$ can be considered as a curve $\tilde{\mathcal{C}}$ with $\delta-1$ nodes minus two regular points $p,q$. Then the class of its Hilbert scheme can be computed as $\left[\tilde{\mathcal{C}}^{[m]}\right]=\sum_{k=0}^m \left[(\mathcal{C}_{b_0}- x)^{[m-k]}\right]\times C_{p,q}^{[k]}$, where $C_{p,q}^{[k]}$ is the Hilbert scheme with support $p\cup q$. As $p$ and $q$ are regular points, $\left[C_{p,q}^{[k]}\right]$ is just the number of length non ordered $k-$ple in $p, q$ , which is equal to $k$.\\
In conclusion we can write
\begin{equation}\label{strathilb}
\left[ \mathcal{C}_{b_0}^{[m,m+1]}\right] =\cohilb\times \mathcal{C}_{b_0}+\delta \mathbb{L}\left[\tilde{\mathcal{C}}^{[m]}\right]
\end{equation}

\subsubsection{Computation of $\mathfrak{w}(\mathbb{I}^{m}$)}

We remind that
$$H^i(\mathcal{C}_b^{[m,m+1]})=H^{i}(\mathcal{C}_b^{[m]})\oplus H^{i-1}(\mathcal{C}_b^{[m]})\otimes H^1(\mathcal{C}_b)\oplus H^{i-2}(\mathcal{C}_b^{[m]})(-1).$$
We notice that, by applying the MacDonald formula to second term we get
$$H^{i-1}(\mathcal{C}_b^{[m]})\otimes H^1(\mathcal{C}_b)=\bigoplus_{k=0}^{[\frac{i-1}{2}]}\bigwedge^{i-1-2k}H^1(\mathcal{C}_b)\otimes H^1(\mathcal{C}_b) (-k)$$
 As a result we will have to find both the invariants of $\bigwedge^l H^1(\mathcal{C}_b)$ and those of $\bigwedge^l H^1(\mathcal{C}_b)\otimes H^1(\mathcal{C}_b)$.
 We have seen how to find the invariants of $\bigwedge^l H^1(\mathcal{C}_b)$ in the computation for the Hilbert scheme; when looking at the invariants of $\bigwedge^l H^1(\mathcal{C}_b)\otimes H^1(\mathcal{C}_b)$ we have to be more careful: there is more than just the invariant of $\bigwedge^l H^1(\mathcal{C}_b)$ times the invariant of $H^1(\mathcal{C}_b)$.\\
 Let us be more precise: recall that $H^1(\mathcal{C}_b)=\bigoplus_{i=1}^{\delta} V_i$ and that we have
 \begin{equation}
 \bigwedge^l H^1(\mathcal{C}_b)=\bigoplus_{l_1+\ldots+l_{\delta}=l}\bigwedge^{l_1}V_1\otimes \ldots \otimes\bigwedge^{l_\delta}V_{\delta},\qquad 0\leq l_i\leq 2.
 \end{equation}
 Also, as $\dim V_i=2$
 $$\bigwedge^{l_i}V_i=
 \begin{cases}
 \co \quad \qquad \mbox{ if }l_i=0\\
 V_i \quad \qquad \mbox{ if }l_i=1\\
 \co(-1) \ \quad \mbox{ if }l_i=2\\
 \end{cases}.$$
 Thus 
\begin{equation}\label{invr1}
\bigwedge^l H^1(\mathcal{C}_b)\otimes H^1(\mathcal{C}_b)=(\bigoplus_{l_1+\ldots+l_{\delta}=l}\bigwedge^{l_1}V_1\otimes \ldots \otimes\bigwedge^{l_\delta}V_{\delta})\otimes(V_1\oplus\ldots \oplus V_{\delta}).
\end{equation}

By the considerations above, the monodromy invariants of summands of type $V_i\otimes V_j$ for $i\neq j$ are just an invariant of $V_i$ tensor an invariant of $V_j$, while invariants of summands of type $\bigwedge^2 V_i \otimes V_j$ are just the invariants of $V_i$ with shifted weight. \\
The invariants which are not the tensor product of an invariant of $\bigwedge^l H^1(\mathcal{C}_b)$ times an invariant of $H^1(\mathcal{C}_b)$ come from the summands $V_i\otimes V_i=\bigwedge^2 V_i\otimes Sym^2(V_i)$. These summands provide additional invariants of weight 2, which are those of $\bigwedge^2 V_i$.

As equation (\ref{invr1}) is symmetric in the $V_i$'s it is sufficient to compute the invariants of $$(\bigoplus_{l_1+\ldots+l_{\delta}=l}\bigwedge^{l_1}V_1\otimes \ldots \otimes\bigwedge^{l_\delta}V_{\delta})\otimes V_1 $$
and multiply what we obtain by $\delta$.\\
If $l_1\neq 1$ then the formula we wrote for the Hilbert scheme still holds, while when $l_1=1$ we have a certain number of invariants of weight 2 to take into account.
\begin{align*}
\mathfrak{w}\left((H^i(\mathcal{C}_b^{[m]})\otimes H^1(\mathcal{C}_b))^{\pi_1(B\setminus\Delta)}\right) &=\delta\sum_{k=0}^{[\frac{i}{2}]}\mathbb{L}^k\sum_{j=0}^{[\frac{i-2k}{2}]}\binom{\delta-1}{j-1}\binom{\delta-j}{i-2k-2j}\mathbb{L}^j+\\
&+(1+\mathbb{L})\binom{\delta-1}{j}\binom{\delta-1-j}{i-2k-2j-1}\mathbb{L}^j+\\
&+\binom{\delta-1}{j}\binom{\delta-1-j}{i-2k-2j}\mathbb{L}^j
\end{align*}
The first term in the sum represents the case in which $l_1=2$, the second one is the case of $l_1=1$ and the last one is $l_1=0$. As in the previous formula, the index $k$ is the one in the MacDonald formula, while the index $j$ represents the number of $l_i\neq l_1$ that are equal to 2.  \\
Summing over $i$ we get 
\begin{align*}
\mathfrak{w}(\mathbb{I}^m)&=\sum_{i=0}^m (1+\mathbb{L}^{m+1-i}) \sum_{k=0}^{[\frac{i}{2}]}\mathbb{L}^k\sum_{j=0}^{[\frac{i-2k}{2}]}\binom{\delta}{j}\binom{\delta-j}{i-2k-2j}\mathbb{L}^j+\\ &+\delta\sum_{k=0}^{[\frac{i-1}{2}]}\mathbb{L}^k\sum_{j=0}^{[\frac{i-1-2k}{2}]}\binom{\delta-1}{j-1}\binom{\delta-j}{i-1-2k-2j}\mathbb{L}^j+(1+\mathbb{L})\binom{\delta-1}{j}\binom{\delta-1-j}{i-1-2k-2j-1}\mathbb{L}^j+\\
&+\binom{\delta-1}{j}\binom{\delta-1-j}{i-1-2k-2j}\mathbb{L}^j+\mathbb{L}\sum_{k=0}^{[\frac{i-2}{2}]}\mathbb{L}^k\sum_{j=0}^{[\frac{i-2-2k}{2}]}\binom{\delta}{j}\binom{\delta-j}{i-2-2k-2j}\mathbb{L}^j+\\
&+(-1)^{m+1}\delta\sum_{k=0}^{[\frac{m}{2}]}\mathbb{L}^k\sum_{j=0}^{[\frac{m-2k}{2}]}\binom{d-1}{j-1}\binom{d-j}{m-2k-2j}\mathbb{L}^j+\\
&+(1+\mathbb{L})\binom{\delta-1}{j}\binom{\delta-1-j}{m-2k-2j-1}\mathbb{L}^j+\binom{\delta-1}{j}\binom{\delta-1-j}{m-2k-2j}\mathbb{L}^j+\\
&+2\mathbb{L}\sum_{k=0}^{[\frac{m-1}{2}]}\mathbb{L}^k\sum_{j=0}^{[\frac{m-1-2k}{2}]}\binom{\delta}{j}\binom{\delta-j}{m-1-2k-2j}\mathbb{L}^j.
\end{align*}

Looking at equation $(\ref{strathilb})$ we want to separate the invariants which are the tensor product of invariants of the Hilbert scheme and the invariants of the curve from those coming from the weight 2 part of the pieces $V_i\otimes V_i$, which we will prove to be precisely the invariants of the Hilbert scheme of the curves with $\delta-1$ nodes. 
We want to show that the former are 
\begin{align*}
A=&\sum_{i=0}^m (1+\mathbb{L}^{m+1-i}) \sum_{k=0}^{[\frac{i}{2}]}\mathbb{L}^k\sum_{j=0}^{[\frac{i-2k}{2}]}\binom{\delta}{j}\binom{\delta-j}{i-2k-2j}\mathbb{L}^j+\\ &+\delta\sum_{k=0}^{[\frac{i-1}{2}]}\mathbb{L}^k\sum_{j=0}^{[\frac{i-1-2k}{2}]}\binom{\delta-1}{j-1}\binom{\delta-j}{i-1-2k-2j}\mathbb{L}^j+\binom{\delta-1}{j}\binom{\delta-1-j}{i-2k-2j-1}\mathbb{L}^j+\\
&+\binom{\delta-1}{j}\binom{\delta-1-j}{i-1-2k-2j}\mathbb{L}^j+\mathbb{L}\sum_{k=0}^{[\frac{i-2}{2}]}\mathbb{L}^k\sum_{j=0}^{[\frac{i-2-2k}{2}]}\binom{\delta}{j}\binom{\delta-j}{i-2-2k-2j}\mathbb{L}^j+\\
&+(-1)^{m+1} 2\mathbb{L}\sum_{k=0}^{[\frac{m-1}{2}]}\mathbb{L}^k\sum_{j=0}^{[\frac{m-1-2k}{2}]}\binom{\delta}{j}\binom{\delta-j}{m-1-2k-2j}\mathbb{L}^j+\delta\sum_{k=0}^{[\frac{m}{2}]}\mathbb{L}^k\sum_{j=0}^{[\frac{m-2k}{2}]}\binom{\delta-1}{j-1}\binom{\delta-j}{m-2k-2j}\mathbb{L}^j+\\
&+\binom{\delta-1}{j}\binom{\delta-1-j}{m-1-2k-2j}\mathbb{L}^j+\binom{\delta-1}{j}\binom{\delta-1-j}{i-1-2k-2j}\mathbb{L}^j+\\
&+2\mathbb{L}\sum_{k=0}^{[\frac{m-1}{2}]}\mathbb{L}^k\sum_{j=0}^{[\frac{m-1-2k}{2}]}\binom{\delta}{j}\binom{\delta-j}{m-1-2k-2j}\mathbb{L}^j.
\end{align*}
while the latter are
\begin{align*}
B&=\delta \mathbb{L} \sum_{i=0}^m(-1)^i(1+\mathbb{L}^{m+1-i}) \sum_{k=0}^{[\frac{i-1}{2}]}\mathbb{L}^k\sum_{j=0}^{[\frac{i-1-2k}{2}]}\binom{\delta-1}{j}\binom{\delta-1-j}{i-2k-2j-1}\mathbb{L}^j+\binom{\delta-1}{j}\binom{\delta-1-j}{i-1-2k-2j}\mathbb{L}^j+\\
&(-1)^{m+1}\delta\mathbb{L}\sum_{k=0}^{[\frac{m}{2}]}\mathbb{L}^k\sum_{j=0}^{[\frac{m-2k}{2}]}+\binom{\delta-1}{j}\binom{\delta-1-j}{m-2k-2j-1}\mathbb{L}^j.
\end{align*}

\begin{lem}
	$$ A=\left[\cohilb\times \mathcal{C}_{b_0}\right]=\mathfrak{w}(\mathbb{H}^m)(\mathbb{L}-\delta+1)$$
\end{lem}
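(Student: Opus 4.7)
The lemma contains two equalities, which I address in turn. The second, $[\cohilb\times\mathcal{C}_{b_0}]=\mathfrak{w}(\mathbb{H}^m)(\mathbb{L}-\delta+1)$, is essentially formal: by multiplicativity of the weight polynomial on products, $\mathfrak{w}(\cohilb\times\mathcal{C}_{b_0})=\mathfrak{w}(\cohilb)\cdot\mathfrak{w}(\mathcal{C}_{b_0})$; the first factor equals $\mathfrak{w}(\mathbb{H}^m)$ by point $(i)$ of Proposition \ref{weights}, already proved; for the second factor, since $\mathcal{C}_{b_0}$ is a rational curve with $\delta$ nodes, in the Grothendieck group $[\mathcal{C}_{b_0}]=[\mathbb{P}^1]-2\delta+\delta=\mathbb{L}+1-\delta$.

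The first equality $A=[\cohilb\times\mathcal{C}_{b_0}]$ is the real combinatorial content of the lemma. My plan is to avoid a direct evaluation of the nested sum for $A$ and instead recognize it as a product. Looking back at (\ref{run}), the three groups of summands assembled into $A$ correspond exactly to the three summands $H^{i}(\mathcal{C}_b^{[m]})$, $H^{i-1}(\mathcal{C}_b^{[m]})\otimes H^1(\mathcal{C}_b)$, and $H^{i-2}(\mathcal{C}_b^{[m]})(-1)$ of $H^{i}(\mathcal{C}_b^{[m,m+1]})$. By construction, $A$ records precisely the monodromy invariants that split as an invariant of $H^*(\mathcal{C}_b^{[m]})$ tensored with an invariant of $H^*(\mathcal{C}_b)$, whereas the extra invariants coming from the diagonal pieces $V_i\otimes V_i$ (which are \emph{not} pure tensors of invariants) are precisely those collected in $B$. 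Therefore
$$A=\mathfrak{w}\bigl((H^*(\mathcal{C}_b^{[m]}))^{\pi_1(B\setminus\Delta)}\bigr)\cdot\mathfrak{w}\bigl((H^*(\mathcal{C}_b))^{\pi_1(B\setminus\Delta)}\bigr).$$
The first factor is $\mathfrak{w}(\mathbb{H}^m)$ by definition; for the second, the monodromy invariants of $H^*(\mathcal{C}_b)$ are $H^0$, the $\delta$-dimensional subspace of $H^1(\mathcal{C}_b)$ spanned by the vanishing cycles $\alpha_1,\ldots,\alpha_\delta$ (all of weight $0$), and $H^2\cong\mathbb{Q}(-1)$, so the weight polynomial equals $1-\delta+\mathbb{L}$.

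To make the identification rigorous at the level of the explicit binomial expression, I would reorganize $A$ into three blocks matching the decomposition above. The $H^i(\mathcal{C}_b^{[m]})$-block is literally the formula for $\mathfrak{w}(\mathbb{H}^m)$ from the proof of point $(i)$; the $H^{i-2}(\mathcal{C}_b^{[m]})(-1)$-block gives $\mathbb{L}\cdot\mathfrak{w}(\mathbb{H}^m)$ after a shift of summation indices; and the middle $H^{i-1}(\mathcal{C}_b^{[m]})\otimes H^1(\mathcal{C}_b)$-block, once the diagonal extras are discarded, gives $-\delta\cdot\mathfrak{w}(\mathbb{H}^m)$ (the factor $\delta$ counting the invariants $\alpha_i$, with the minus sign accounting for the extra cohomological degree $1$). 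The main obstacle is the bookkeeping in this middle block: one must apply the Pascal identity $\binom{\delta-1}{j-1}+\binom{\delta-1}{j}=\binom{\delta}{j}$ to merge the contributions coming from $l_1=0$ and $l_1=2$, and then use the swap $\binom{\delta}{j}\binom{\delta-j}{i-2k-2j}=\binom{\delta}{i-2k-j}\binom{i-2k-j}{j}$ already exploited in Section 4.1, so that the same telescoping manipulation that concluded the proof of point $(i)$ applies once more. Summing the three blocks then yields $(1-\delta+\mathbb{L})\,\mathfrak{w}(\mathbb{H}^m)$, as required.
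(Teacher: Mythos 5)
Your proposal is correct, but it reaches the identity by a genuinely different route than the paper. The paper proves the lemma by brute-force manipulation of the displayed binomial expression: it first uses the Pascal-type identity to merge the three $l_1$-cases into $I(i-1,\delta)$, and then performs the index shifts $t=i-1$ and $t=i-2$ so that the folded (Poincar\'e-dual) sums telescope to $-\delta\,\mathfrak{w}(\mathbb{H}^m)$ and $(1+\mathbb{L})\,\mathfrak{w}(\mathbb{H}^m)$ respectively. You instead observe that, since the monodromy generators $T_i$ act on disjoint symplectic pairs, the invariants of $\bigwedge^{l}H^1(\mathcal{C}_b)\otimes H^1(\mathcal{C}_b)$ decompose as (tensor products of invariants) $\oplus$ (weight-two extras from the diagonal pieces $V_i\otimes V_i$), so that $A$, being by construction the split part, factors K\"unneth-style as $\mathfrak{w}(\mathbb{H}^m)\cdot\mathfrak{w}\bigl((H^*(\mathcal{C}_b))^{\pi_1}\bigr)=\mathfrak{w}(\mathbb{H}^m)(1-\delta+\mathbb{L})$; combined with multiplicativity of $\mathfrak{w}$, part (i), and $[\mathcal{C}_{b_0}]=\mathbb{L}+1-\delta$ for a rational $\delta$-nodal curve, this gives both equalities. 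This is cleaner and explains \emph{why} the factor $\mathbb{L}-\delta+1$ appears, whereas the paper's computation buys a direct verification on the explicit formula for $A$ as printed. One caveat in your sketch: in the folded form actually displayed for $A$ (with the factor $1+\mathbb{L}^{m+1-i}$ coming from duality on the $(m+1)$-dimensional nested Hilbert scheme), the $H^i(\mathcal{C}_b^{[m]})$-block and the $H^{i-2}(\mathcal{C}_b^{[m]})(-1)$-block are \emph{not} literally $\mathfrak{w}(\mathbb{H}^m)$ and $\mathbb{L}\,\mathfrak{w}(\mathbb{H}^m)$ separately; they only yield $(1+\mathbb{L})\,\mathfrak{w}(\mathbb{H}^m)$ after being combined with each other and with the middle-degree boundary term $(-1)^{m+1}2\mathbb{L}I(m-1,\delta)$, which is exactly the telescoping you defer to (the clean block-by-block match holds only in the unfolded sum over all cohomological degrees $0,\dots,2m+2$). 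Since you explicitly invoke that same telescoping manipulation, this is a misstatement of detail rather than a gap, but it should be fixed if the argument is written out.
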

\begin{proof}
	First we notice that, due to properties of binomial coefficients, the quantity
	\begin{align*}
	&\sum_{k=0}^{[\frac{i-1}{2}]}\mathbb{L}^k\sum_{j=0}^{[\frac{i-1-2k}{2}]}\binom{\delta-1}{j-1}\binom{\delta-j}{i-1-2k-2j}\mathbb{L}^j+\binom{\delta-1}{j}\binom{\delta-1-j}{i-2k-2j-1}\mathbb{L}^j+\binom{\delta-1}{j}\binom{\delta-1-j}{i-1-2k-2j}\mathbb{L}^j
	\end{align*}
	is equal to 
	$$ \sum_{k=0}^{[\frac{i-1}{2}]}\mathbb{L}^k\sum_{j=0}^{[\frac{i-1-2k}{2}]}\binom{\delta}{j}\binom{\delta-j}{i-1-2k-2j}\mathbb{L}^j= I(i-1,\delta);$$
	thus
	$$A=\sum_{i=0}^m(-1)^i(1+\mathbb{L}^{m+1-i}) I(i,\delta)+\delta I(i-1,\delta)+\mathbb{L}I(i-2,\delta)+(-1)^{m+1}\left( 2\mathbb{L}I(m-1,\delta)+\delta I(m,\delta)\right).$$ 
	Now, up t a change of indexes $i'=i-1$, we see that 
	$$\sum_{i=0}^m(-1)^i(1+\mathbb{L}^{m+1-i})\delta I(i-1,\delta)+\delta(-1)^{m+1}I(m,\delta)=-\delta \mathfrak{w}(\mathbb{H}^{[m]}).$$
	Also,
	\begin{align*}
	&\sum_{i=0}^m(-1)^i(1+\mathbb{L}^{m+1-i}) I(i,\delta)+\mathbb{L}I(i-2,\delta)+(-1)^{m+1} 2\mathbb{L}I(m-1,\delta)=\\
	&\sum_{i=0}^m(-1)^i(1+\mathbb{L}^{m+1-i}) I(i,\delta)+\sum_{i=0}^m(-1)^i(\mathbb{L}+\mathbb{L}^{m+2-i}) +(-1)^{m+1} 2\mathbb{L}I(m-1,\delta).
	\end{align*}
	If one again sets $i'=i-2$, this becomes
	\begin{align*}
	&(1+\mathbb{L})\sum_{i=0}^{m-2}(-1)^i\mathbb{L}^{m-i})I(i,\delta)+\sum_{i=0}^{m-2}(-1)^i\mathbb{L}^{m-i} I(i,\delta)+\\
	&+(-1)^{m+1} 2\mathbb{L}I(m-1,\delta)+(-1)^m I(m,d)(1+\mathbb{L})+(-1)^{m-1}(1+\mathbb{L}^2)=\\
	&=(1+\mathbb{L})\left(\sum_{i=0}^{m-1}(-1)^i\mathbb{L}^{m-i})I(i,\delta)+(-1)^{m}I(m,d)\right)=(1+\mathbb{L})\mathfrak{w}(\mathbb{H}^m).
	\end{align*}
\end{proof}
Analogously, using properties of binomial coefficients and setting $i'=i-1$ one can prove
\begin{lem}
	$$B=\delta\mathbb{L}\left[\tilde{\mathcal{C}}^{[m]}\right]$$
\end{lem}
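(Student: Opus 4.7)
The plan is to factor $\delta\mathbb{L}$ out of $B$ and to identify the remaining expression with $\mathfrak{w}(\mathbb{H}^m)$ computed for a family whose central fibre has $\delta-1$ rather than $\delta$ nodes. Since the argument used to prove $\mathfrak{w}(\mathbb{H}^m)=\mathfrak{w}(C^{[m]})$ in the Hilbert scheme case depends on $\delta$ only through the coefficient $\delta$ appearing in the binomials, that identity applied with $\delta-1$ in place of $\delta$ yields $\mathfrak{w}(\mathbb{H}^m)\big|_{\delta\mapsto\delta-1}=[\tilde{\mathcal{C}}^{[m]}]$, which is exactly what we want.

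Concretely, I would first collapse the two summands in the inner double sum of $B$ using the Pascal-type identity
\[
\binom{\delta-1}{j}\binom{\delta-1-j}{i-2k-2j-1}+\binom{\delta-1}{j}\binom{\delta-1-j}{i-1-2k-2j}
\]
so that what remains under the sign $(-1)^i(1+\mathbb{L}^{m+1-i})$ matches, after the index shift $t=i-1$, the quantity $I(t,\delta-1)$ that appeared in the proof of the previous lemma. This is the step where the author's hint ``using properties of binomial coefficients and setting $t=i-1$'' is used: the binomial combination gives the shape of $I(\cdot,\delta-1)$, and the substitution realigns the summand $(-1)^i(1+\mathbb{L}^{m+1-i})$ with $(-1)^{t+1}(1+\mathbb{L}^{m-t})$.

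After the shift, the boundary term at $i=0$ disappears (the binomial coefficients with negative lower index vanish) and the boundary term $(-1)^{m+1}\delta\mathbb{L}\sum\binom{\delta-1}{j}\binom{\delta-1-j}{m-2k-2j-1}\mathbb{L}^j$ combines with the top term of the shifted sum to reproduce $(-1)^m I(m,\delta-1)$. What remains is precisely
\[
B=\delta\mathbb{L}\left(\sum_{t=0}^{m-1}(-1)^t(1+\mathbb{L}^{m-t})I(t,\delta-1)+(-1)^m I(m,\delta-1)\right),
\]
which is $\delta\mathbb{L}\,\mathfrak{w}(\mathbb{H}^m)\big|_{\delta\mapsto\delta-1}$. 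Invoking the Hilbert scheme identity from Proposition \ref{weights}(i), we obtain $B=\delta\mathbb{L}\,[\tilde{\mathcal{C}}^{[m]}]$.

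The main obstacle is purely bookkeeping: tracking the boundary contributions arising from the index shift $t=i-1$ and verifying that the Pascal-type identity produces exactly the aggregated binomial $\binom{\delta}{j}\binom{\delta-j}{i-1-2k-2j}$ (with $\delta-1$ playing the role of $\delta$). Since these manipulations are formally identical to the ones already carried out for $A$ in the preceding lemma, no new ideas are needed, only careful accounting of the summation ranges.
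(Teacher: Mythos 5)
Your overall strategy (factor out $\delta\mathbb{L}$, recognize the leftover sum as the monodromy--invariant weight polynomial of a Hilbert scheme of a curve with $\delta-1$ nodes, then quote part (i) of Proposition \ref{weights} for that curve) is exactly what the paper's one--line ``analogously'' proof intends. However, your specific identification is off by one, and the identity you end with is false. The extra weight--two invariants in degree $i$ come from the summand $H^{i-1}(\mathcal{C}_b^{[m]})\otimes H^1(\mathcal{C}_b)$: writing $H^{i-1}(\mathcal{C}_b^{[m]})=\bigoplus_k\bigwedge^{i-1-2k}H^1(\mathcal{C}_b)(-k)$, a new invariant appears when some $V_a$ occurs to the first power inside the wedge and is paired with the $V_a$ coming from the extra $H^1$ factor; what multiplies the resulting $\bigwedge^2V_a\cong\mathbb{Q}(-1)$ is the invariant part of $\bigwedge^{i-2-2k}\bigl(\bigoplus_{b\neq a}V_b\bigr)$. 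Hence the inner sum is $I(i-2,\delta-1)$, not $I(i-1,\delta-1)$; the natural shift is $t=i-2$, the duality prefactor becomes $(1+\mathbb{L}^{m-1-t})$, and the middle--degree boundary term is $I(m-1,\delta-1)$. Carrying this out yields $B=\delta\mathbb{L}\,\mathfrak{w}(\mathbb{H}^{m-1})\big|_{\delta\mapsto\delta-1}$, i.e.\ $\delta\mathbb{L}$ times the class of the $(m-1)$--st, not the $m$--th, Hilbert scheme of the $(\delta-1)$--nodal curve. Your claimed formula already fails for $\delta=1$, $m=1$: there $\mathcal{C}_b^{[1,2]}=\mathcal{C}_b\times\mathcal{C}_b$ with $\mathcal{C}_b$ of genus one, the only non--product invariant is the single weight--two class $\alpha\wedge\beta$ in $H^2$, so $B=\mathbb{L}$, whereas $\delta\mathbb{L}\,\mathfrak{w}(\mathbb{H}^1)\big|_{\delta=0}=\mathbb{L}(1+\mathbb{L})$.

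This shift is forced by the geometric side, which is what $B$ actually has to match: the residual term in the stratification is $\delta\mathbb{L}\sum_{k}k\,[(\mathcal{C}_{b_0}-x)^{[m-k]}]$, and since a length--$k$ subscheme supported on two smooth points $p,q$ has $k+1$ (not $k$) possibilities, this sum is the class of the $(m-1)$--st Hilbert scheme of the curve with $\delta-1$ nodes; the paper's ``$[C^{[k]}_{p,q}]=k$'' silently encodes the same shift, and its displayed $B$ is typo--ridden (the two binomials you propose to merge by a Pascal identity are literally the same term, while the correct single term has top entry $i-2-2k-2j$). So the argument goes through along your lines only after replacing $\mathfrak{w}(\mathbb{H}^{m})\big|_{\delta\mapsto\delta-1}$ by $\mathfrak{w}(\mathbb{H}^{m-1})\big|_{\delta\mapsto\delta-1}$ throughout and matching it with $\delta\mathbb{L}\sum_k k[(\mathcal{C}_{b_0}-x)^{[m-k]}]$; as written, your boundary terms cannot combine to $(-1)^mI(m,\delta-1)$, and the target $\delta\mathbb{L}$ times the honest $m$--th Hilbert scheme class is simply not equal to $B$.
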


and this complete the proof of proposition (\ref{weights}) and Theorem 1.\\

\subsection*{Acknowledgements} I wish to thank my supervisor Luca Migliorini for suggesting me this problem as long as for the countless comments and corrections. Then I would like to thank Filippo Viviani and Gabriele Mondello for the helpful comments and suggestions. I also thank the anonymous reviewers for pointing out a mistake in the previous version of this paper as long as several imprecisions and misprints. Finally I am grateful to Enrico Fatighenti, Danilo Lewanski, Giovanni Mongardi and Marco Trozzo for the support in writing this article. 

\vspace{40pt}
\noindent
Dr. Camilla Felisetti\\
Department of Mathematics,\\
University of Bologna,\\
Piazza di Porta San Donato 5\\
40126 Bologna\\
Italy\\
camilla.felisetti2@unibo.it\\
		\end{document}